\documentclass{amsart}
%
\usepackage{mathrsfs,amssymb,mathrsfs, multirow,setspace}
\onehalfspacing
\usepackage[a4paper, total={7in, 9in}]{geometry}
\usepackage{enumerate}
\theoremstyle{plain}
\usepackage{graphicx}
\newtheorem{theorem}{Theorem}[section]
\newtheorem{lemma}[theorem]{Lemma}

\newtheorem{corollary}[theorem]{Corollary}

\theoremstyle{definition}

\theoremstyle{remark}

\input xy
\xyoption{all}

\begin{document}
	\title[Automorphisms of left Ideal relation graph over full matrix ring]{Automorphisms
of left Ideal relation graph over full matrix ring}
	\author[Jitender Kumar, Barkha Baloda, Sanjeet Malhotra]{$\text{Jitender Kumar}^{^*}$, Barkha Baloda, Sanjeet Malhotra}
	\address{Department of Mathematics, Birla Institute of Technology and Science Pilani, Pilani, India}
	\email{jitenderarora09@gmail.com, barkha0026@gmail.com, thesanjeetmalhotra@gmail.com}

	\begin{abstract}
		 The left-ideal relation graph on a ring $R$, denoted by $\overrightarrow{\Gamma_{l-i}}(R)$, is a directed graph whose vertex set is all the elements of $R$ and there is a directed edge from $x$ to a distinct $y$ if and only if the left ideal generated by $x$, written as $[x]$, is properly contained in the left ideal generated by $y$. In this paper, the automorphisms of $\overrightarrow{\Gamma_{l-i}}(R)$ are characterized, where $R$ is the ring of all $n \times n$ matrices over a finite field $F_q$. The undirected left relation graph, denoted by $\Gamma_{l-i}(M_n(F_q))$, is the simple graph whose vertices are all the elements of $R$ and two distinct vertices $x, y$ are adjacent if and only if either $[x] \subset [y]$ or $[y] \subset [x]$ is considered. Various graph theoretic properties of $\Gamma_{l-i}(M_n(F_q))$  including connectedness, girth, clique number, etc.  are studied. 
		 \end{abstract}

	\subjclass[2010]{05C25}
	
	\keywords{Matrix ring, left ideals, graph automorphism, finite field\\ *  Corresponding author}
	
	\maketitle

\section{Introduction}
The study of algebraic structures through graph theoretic properties has emerged as a fascinating and important research topic in the past three decades. A lot of graphs associated with rings such as zero divisor graph, co-maximal graph, commuting graph, inclusion ideal graph, and intersection ideal graph  have been studied ( see \cite{akbari2004commuting, akbari2015inclusion, akbari2006zero, akbari2013some, anderson1999zero, beck1988coloring, redmond2002zero, sharma1995note, wang2009co}). Sharma and Bhatwadekar \cite{sharma1995note}, introduced the graph $\Omega(R)$, for a ring $R$, whose vertices are all the elements of $R$ and two distinct vertices $x$ and $y$ are adjacent if and only if $Rx + Ry = R$. They showed that for a commutative ring $R$, the graph $\Omega(R)$ is finitely colorable if and only if $R$ is a finite ring. Miamani \emph{et al.} \cite{maimani2008comaximal}, studied $\Omega_2(R)$ a  subgraph of $\Omega(R)$ whose vertex set is non-unit elements of $R$ and discussed its connectedness and the diameter.  
Meng Ye \emph{et al.} \cite{ye2012co} modified the definition of $\Omega(R)$, and instead of using the elements of $R$ as a vertex set, they used vertices to be the ideals which are not contained in the Jacobson radical of $R$, and two vertices $I_1$ and $I_2$ are adjacent if and only if $I_1 + I_2 = R$. It is referred as the co-maximal ideal graph of $R$. It has a diameter less than or equal to three, and the clique number and the chromatic number of $\Omega(R)$ is equal to the number of maximal ideals of the ring $R$. Inspired by the above mentioned work, and for the idea of revealing relationships between ideals of a ring $R$ and elements of $R$, in \cite{ma2016automorphism}, X. Ma \emph{et al.}, defined a
directed graph, the ideal-relation graph of $R$, written as $\overrightarrow{\Gamma_{i}}(R)$, whose vertex set is $R$ and there is a directed edge
from a vertex $x$ to a distinct vertex $y$ if and only if the ideal of $R$ generated by $x$ is properly
contained in the ideal generated by $y$. 
In this paper, they discussed the automorphisms of $\overrightarrow{\Gamma_{i}}(R)$, where $R$ is the ring of all $n \times n$ upper triangular matrices over a finite field $F_q$. The symmetries of a graph are described by its
automorphism group. In general, automorphism groups are important for studying sizeable graphs since these symmetries allow
one to simplify and understand the behavior of the graph. Although, the determination of the full automorphism group is a challenging problem in algebraic graph theory. Recently, the automorphisms of the zero-divisor graph over the matrix ring attracted the attention of researchers (see \cite{ma2016automorphisms, ou2020automorphism, wang2016automorphisms, wong2014group, zhou2017automorphisms}). Also, Feng Xu \emph{et al.} \cite{xu2020automorphism}, determined all the automorphisms of the intersection graph of ideals over a matrix ring.  Automorphisms of the total graph over matrix rings are characterized in \cite{wang2020automorphisms, zhou2017automorphism}. Further,  D. Wong \emph{et al.} \cite{wang2017automorphisms}, characterized the automorphisms of the co-maximal ideal graph over matrix ring. 

Motivated by the work of \cite{ma2016automorphism, xu2020automorphism}, we consider the left ideal relation graph over full matrix ring. The left-ideal relation graph $\overrightarrow{\Gamma_{l-i}}(R)$ of a ring $R$ is a directed simple graph whose vertex set is all the elements  of $R$ and there is a directed edge from $x$ to a distinct $y$ if and only if the left ideal generated by $x$ is properly contained in the left ideal generated by $y$. The paper is arranged as follows.  In Section 2, we state necessary fundamental notions and fix our notation. Section 3 comprises the results concerning the automorphisms of left-ideal relation of matrix ring  over a finite field $F_q$.  In Section 4, we study various graph invariants of undirected left-ideal relation graph  $\Gamma_{l-i}(R)$ viz.  girth, dominance number, independence number and clique number etc. 

\section{Preliminaries}
In this section, we recall basic definitions from \cite{westgraph} and fix our notation which will be used throughout this paper. Let $\Gamma$ be a graph, $V(\Gamma)$ and $E(\Gamma)$ be the vertex set and edge set, of $\Gamma$ respectively. The \emph{distance} between two vertices $u, v$ in  a graph $\Gamma$ is the number of edges in a shortest path connecting them and it is denoted by $d(u, v)$.  For a vertex $v$, the \emph{eccentricity} of $v$ is the maximum of the distance to any vertex in the graph. The \emph{diameter} is the maximum of the eccentricity of any vertex in $\Gamma$ and the \emph{radius} of $\Gamma$ is the minimum eccentricity among all vertices in the graph $\Gamma$. 
The \emph{degree} of the vertex $v$ in $\Gamma$ is the number of edges incident to $v$ and it is denoted by $deg(v)$. A \emph{cycle} is a closed walk with distinct vertices except for the initial and end vertex, which are equal and a cycle of length $n$ is denoted by $C_n$. The \emph{girth} of $\Gamma$ is the length of its shortest cycle and is denoted by ${g(\Gamma)}$. A connected graph $\Gamma$ is Eulerian if and only if degree of every vertex is even {\cite[Theorem 1.2.26]{westgraph}}. The \emph{chromatic number} of $\Gamma$, denoted by $\chi(\Gamma)$, is the smallest number of colors needed to color the vertices  of $\Gamma$ so that no two adjacent vertices share the same color. A \emph{clique} in $\Gamma$ is a set of pairwise adjacent vertices. The \emph{clique number} of $\Gamma$ is the size of maximum clique in $\Gamma$ and it is denoted by $\omega(\Gamma)$. It is well known that $\omega(\Gamma) \leq \chi(\Gamma)$ (see \cite{westgraph}). 	A subset $D$ of $V(\Gamma)$ is said to be a dominating set if any vertex in $V(\Gamma) \setminus D$ is adjacent to at least one vertex in $D$. If $D$ contains only one vertex then that vertex is called dominating vertex. The \emph{domination number} $\gamma(\Gamma)$ of $\Gamma$ is the minimum size of a dominating set in $\Gamma$. A graph $\Gamma$ is said to be planar if it can be drawn on a plane without any crossing of its edges. For  vertices $u$ and $v$ in a graph $\Gamma$, we say that $z$ \emph{strongly resolves} $u$ and $v$ if there exists a shortest path from $z$ to $u$ containing $v$, or a shortest path from $z$ to $v$ containing $u$. A subset $U$ of $V(\Gamma)$ is a \emph{strong resolving set} of $\Gamma$ if every pair of vertices of $\Gamma$ is strongly resolved by some vertex of $U$. The least cardinality of a strong resolving set of $\Gamma$ is called the \emph{strong metric dimension} of $\Gamma$ and is denoted by $\operatorname{sdim}(\Gamma)$. For  vertices $u$ and $v$ in a graph $\Gamma$, we write $u\equiv v$ if $N[u] = N[v]$. Notice that that $\equiv$ is an equivalence relation on $V(\Gamma)$.
We denote by $\widehat{v}$ the $\equiv$-class containing a vertex $v$ of $\Gamma$.
 Consider a graph $\widehat{\Gamma}$ whose vertex set is the set of all $\equiv$-classes, and vertices $\widehat{u}$ and  $\widehat{v}$ are adjacent if $u$ and $v$ are adjacent in $\Gamma$. This graph is well-defined because in $\Gamma$, $w \sim v$ for all $w \in \widehat{u}$ if and only if $u \sim v$.  We observe that $\widehat{\Gamma}$ is isomorphic to the subgraph $\mathcal{R}_{\Gamma}$ of $\Gamma$ induced by a set of vertices consisting of exactly one element from each $\equiv$-class. Subsequently, we have the following result of \cite{ma2018strong} with $\omega(\mathcal{R}_{\Gamma})$ replaced by $\omega(\widehat{\Gamma})$.

\begin{theorem}[{\cite[Theorem 2.2]{ma2018strong}}]\label{strong-metric-dim}
For any graph $\Gamma$ with diameter $2$,  $\operatorname{sdim}(\Gamma) = |V(\Gamma)| - \omega(\widehat{\Gamma})$.
\end{theorem}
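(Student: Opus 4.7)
The plan is to reduce the theorem to the classical characterization of $\operatorname{sdim}(\Gamma)$ due to Oellermann and Peters-Fransen, which says that $\operatorname{sdim}(\Gamma)$ equals the minimum vertex cover number of the \emph{strong resolving graph} $G_{SR}(\Gamma)$. Here $G_{SR}(\Gamma)$ has vertex set $V(\Gamma)$, with an edge $\{u,v\}$ whenever $u$ and $v$ are \emph{mutually maximally distant} (MMD), meaning $d(u,w)\le d(u,v)$ for every $w\in N(v)$ and symmetrically $d(v,w')\le d(u,v)$ for every $w'\in N(u)$. By the Gallai identity, this minimum vertex cover equals $|V(\Gamma)|-\alpha(G_{SR}(\Gamma))$, so it is enough to show $\alpha(G_{SR}(\Gamma))=\omega(\widehat{\Gamma})$ under the diameter-$2$ hypothesis.

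The next step is to classify MMD pairs when $\operatorname{diam}(\Gamma)=2$. If $d(u,v)=2$, then every neighbor $w$ of $v$ automatically satisfies $d(u,w)\le 2 = d(u,v)$, and symmetrically, so the pair is MMD. If $d(u,v)=1$, then the requirement that every $w\in N(v)$ satisfy $d(u,w)\le 1$ translates to $N(v)\subseteq N[u]$, that is $N[v]\subseteq N[u]$; combining this with the symmetric condition yields $N[u]=N[v]$. So an adjacent pair is MMD if and only if $u\equiv v$.

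Consequently, two distinct vertices are non-adjacent in $G_{SR}(\Gamma)$ precisely when they are adjacent in $\Gamma$ and lie in different $\equiv$-classes. An independent set of $G_{SR}(\Gamma)$ is therefore exactly a clique of $\Gamma$ consisting of pairwise inequivalent vertices, which is the same data as a clique of $\widehat{\Gamma}$ together with a choice of representative from each class. This gives a size-preserving correspondence between independent sets of $G_{SR}(\Gamma)$ and cliques of $\widehat{\Gamma}$, so $\alpha(G_{SR}(\Gamma))=\omega(\widehat{\Gamma})$, and the conclusion follows.

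The main obstacle is the adjacent-pair case of the MMD analysis: one must verify that the one-sided neighborhood containment $N(v)\subseteq N[u]$ actually upgrades to $N[v]\subseteq N[u]$ (using $v\in N(u)$), and that imposing both directions forces the full $\equiv$-relation. Once this characterization is established, the passage from independent sets of $G_{SR}(\Gamma)$ to cliques of $\widehat{\Gamma}$ is purely bookkeeping.
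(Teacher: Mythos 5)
The paper does not prove this statement; it is imported verbatim from \cite{ma2018strong} (Theorem 2.2 there), so there is no internal proof to compare against. Your argument is correct and is essentially the standard derivation: the Oellermann--Peters-Fransen reduction of $\operatorname{sdim}$ to a minimum vertex cover of the strong resolving graph, Gallai's identity, and the observation that in a diameter-$2$ graph a pair is mutually maximally distant exactly when it is at distance $2$ or is an adjacent pair with $N[u]=N[v]$, which makes independent sets of $G_{SR}(\Gamma)$ correspond bijectively (size-preservingly) to cliques of $\widehat{\Gamma}$. Two small points worth making explicit if you write this up: the Oellermann--Peters-Fransen theorem needs $\Gamma$ connected, which diameter $2$ supplies; and the correspondence with cliques of $\widehat{\Gamma}$ uses the well-definedness of adjacency between $\equiv$-classes (if $u\sim v$ and $w\equiv u$ with $\widehat{w}\neq\widehat{v}$, then $w\sim v$), which is exactly the remark the paper makes when defining $\widehat{\Gamma}$.
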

Let $F_q$ be a finite field, where $q$ is prime power, and let $M_n(F_q)$ be the ring of all $n \times n$ matrices over $F_q$. The set of all invertible matrices over $F_q$ will be denoted by $M_n^*(F_q)$. Let $E$ be an $ n \times n$ identity matrix, $E_{s,t}$ the $n \times n$ matrix with  entry in $s^{\text{th}}$ row, $t^{\text{th}}$ column as $1$ rest all are $0$. $J_r$ the matrix $\sum_{i=0}^{r}E_{i,i}$ in $M_n(F_q)$,
$E(i,j)$ the matrix obtained by interchanging $i^{\text{th}}$ and $j^{\text{th}}$ column of identity matrix $E$, $E(i(a))$ the matrix obtained by multiplying $i^{\text{th}}$ column of identity matrix $E$ by $a \in F_q$. Let $I_X$ be the left ideal generated by $X$, for convenience,  we denote it by $[X]$, where $X \in M_n(F_q)$. By $S_X$, we mean the subspace of $F_q^n$ spanned by row vectors of $X \in M_n(F_q)$. Now let $\Gamma$ be a directed graph, $V(\Gamma)$ be the vertex set. For $u, v \in V(\Gamma)$, we write $u \rightarrow v$ if there is a directed edge from $u$ to $v$. Also, by  $ U \sim V$ we mean $u \rightarrow v$ and $v \rightarrow u$. For $u, v \in V(\Gamma)$, we have  $N_i(v) = \{ u ~|\ u \rightarrow v \}$ and  $N_o(v)) = \{ u ~|\ v \rightarrow u \}$.  The in-degree $d_i(v)$ of $v$ is the number of vertices
in  $N_i(v)$. Analogusly, the out-degree $d_o(v)$ of $v$ can be defined as the number of vertices in $N_o(v)$. An automorphism of a graph $\Gamma$ is a permutation $f$ on $V (\Gamma)$ with the property that, for any vertices $u$ and $v$, we
	have $uf \rightarrow vf$ if and only if $u \rightarrow v$. The set $Aut(\Gamma)$ of all graph automorphisms of a graph $\Gamma$ forms a group with
	respect to composition of mappings.

\section{Automorphisms of the left-ideal relation graph of ideals over $M_n(F_q)$}
In this section, we obtain the automorphisms of the left-ideal relation graph of ideals over $M_n(F_q)$. 
Before characterizing all the automorphisms of
$\overrightarrow{\Gamma_{l-i}}(M_n(F_q))$, we introduce two kinds of standard automorphisms for $\overrightarrow{\Gamma_{l-i}}(M_n(F_q))$.

\begin{lemma}\label{varphi}
 For $P \in M_n^*(F_q)$, the map $\varphi_P$ $:\ M_n(F_q) \rightarrow M_n(F_q)$ defined by $\varphi_P(X) = XP $ for each  $X \in M_n(F_q)$, is an automorphism of $\overrightarrow{\Gamma_{l-i}}(M_n(F_q))$.
 \end{lemma}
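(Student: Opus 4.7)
The plan is to verify the two defining conditions for a directed graph automorphism: that $\varphi_P$ is a bijection on the vertex set $M_n(F_q)$, and that it preserves the directed adjacency relation $X \to Y \iff [X] \subsetneq [Y]$ in both directions.

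For bijectivity, I would simply observe that since $P \in M_n^*(F_q)$ is invertible, the map $\varphi_{P^{-1}} : Y \mapsto Y P^{-1}$ serves as a two-sided inverse to $\varphi_P$, so $\varphi_P$ is a permutation of $M_n(F_q)$. This also handles the requirement that $\varphi_P$ sends distinct vertices to distinct vertices, which is needed to preserve the ``distinct $y$'' stipulation in the edge definition.

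The key step is to relate the left ideal $[XP]$ to $[X]$. The left ideal generated by an element $Z \in M_n(F_q)$ is $[Z] = M_n(F_q)\, Z$. Therefore
\[
[XP] \;=\; M_n(F_q)\,(XP) \;=\; \bigl(M_n(F_q)\, X\bigr) P \;=\; [X]\, P.
\]
Right multiplication by the invertible matrix $P$ gives a bijection on $M_n(F_q)$ that in particular sends the additive subgroup $[X]$ bijectively onto $[X]P = [XP]$, and analogously for $Y$. Since right multiplication by $P$ preserves set-containment in both directions (as its inverse is right multiplication by $P^{-1}$, which also preserves containment), I would conclude
\[
[X] \subsetneq [Y] \iff [X]P \subsetneq [Y]P \iff [XP] \subsetneq [YP].
\]

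Combining these pieces: $X \to Y$ in $\overrightarrow{\Gamma_{l-i}}(M_n(F_q))$ means $X \neq Y$ and $[X] \subsetneq [Y]$, which by the two equivalences above is equivalent to $\varphi_P(X) \neq \varphi_P(Y)$ together with $[\varphi_P(X)] \subsetneq [\varphi_P(Y)]$, i.e.\ $\varphi_P(X) \to \varphi_P(Y)$. This establishes $\varphi_P \in \mathrm{Aut}(\overrightarrow{\Gamma_{l-i}}(M_n(F_q)))$. I do not anticipate a substantive obstacle here; the only subtlety worth articulating carefully is the identity $[XP] = [X]P$, which relies squarely on the fact that we are working with \emph{left} ideals so that right multiplication by a fixed matrix commutes with the left $M_n(F_q)$-action used to generate the ideal.
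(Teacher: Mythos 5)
Your proof is correct and follows essentially the same route as the paper: verify bijectivity from the invertibility of $P$, then show that right multiplication by $P$ carries $[X]$ onto $[XP]$ and hence preserves strict containment of left ideals. Your key identity $[XP]=[X]P$ is exactly what the paper establishes (by element-chasing) inside its proof and again as the lemma immediately following this one, so your version is just a slightly more streamlined packaging of the same argument, with the added merit of making the ``if and only if'' on adjacency explicit.
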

 \begin{proof}
 Suppose that $X, Y \in M_n(F_q)$ be arbitrary vertices of $\overrightarrow{\Gamma_{l-i}}(M_n(F_q))$ and let $\varphi_P(X) = \varphi_P(Y)$. Then $XP = YP$. Since $P \in M_n^*(F_q)$ we get $X = Y$. Thus, $\varphi_P$ is one-one. Also, $\varphi_P$ is onto as $M_n(F_q)$ is finite. Therefore, $\varphi_P$ is bijective. For $X \rightarrow Y$, we have $[X] \subset [Y]$. Let $Z \in [XP]$ so there exists atleast one $ W \in M_n(F_q)$ such that $Z = WXP$. Thus,  $ZP^{-1} = WX$  so that $ZP^{-1} \in [Y]$. There exists  $V \in M_n(F_q)$ such that $ZP^{-1} = VY$. Consequently, $Z = VYP$ so that $Z \in [YP]$. Therefore, $[XP] \subseteq [YP]$. Let $L \in [Y]$ and $L \notin [X]$ be arbitrary vertices of $M_n(F_q)$. Then $L = UY$ for some $U \in M_n(F_q)$. Consequently, $LP  = UYP$ implies that $LP \in [YP]$. Now assume that $LP \in [XP]$. Then there exists $N \in M_n(F_q)$ such that  $LP = NXP$. It follows that $L = NX$ and so $L \in [X]$, a contradiction. Thus, $[XP] \subset [YP]$ and $\varphi_P(X) \rightarrow \varphi_P(Y)$. Thus, $\varphi_P$ is an automorphism of $\overrightarrow{\Gamma_{l-i}}(M_n(F_q))$.
 \end{proof}
Define $[X]P = \{AP |\ A \in [X]\}$ and for convenience we write as $[X]P = I_{X}P$. As a consequence, $[\varphi_P(X)] = I_{XP}$. 

\begin{lemma}
 For $P\in M_n^*(F_q)$, we have $I_{XP} = I_XP$.
 \end{lemma}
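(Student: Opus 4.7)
The plan is to verify the set equality $I_{XP} = I_X P$ by a direct double-inclusion argument, unpacking the definition of a left ideal generated by a single element. Since $M_n(F_q)$ is unital, one has the explicit description $I_X = [X] = \{AX : A \in M_n(F_q)\}$, and by the paper's convention $I_X P = \{BP : B \in I_X\} = \{(AX)P : A \in M_n(F_q)\}$. Likewise $I_{XP} = \{A(XP) : A \in M_n(F_q)\}$. So the whole statement is essentially a matter of recognising that the two parametrised families describe the same set.

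For the forward inclusion, take $Y \in I_{XP}$, write $Y = A(XP)$ with $A \in M_n(F_q)$, and use associativity of matrix multiplication to rewrite $Y = (AX)P$; since $AX \in I_X$, we conclude $Y \in I_X P$. For the reverse inclusion, take $Y \in I_X P$, write $Y = (AX)P$ with $A \in M_n(F_q)$ (using the description of $I_X$), and apply associativity again to get $Y = A(XP) \in I_{XP}$. There is no real obstacle here: invertibility of $P$ is not even needed for this equality, although it is assumed in the ambient setting because the subsequent identification $[\varphi_P(X)] = I_{XP} = I_X P$ invoked in the corollary to Lemma~\ref{varphi} uses $\varphi_P$, which does require $P \in M_n^*(F_q)$.
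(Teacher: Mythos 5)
Your proof is correct and follows essentially the same double-inclusion argument as the paper, unpacking $I_X$ as $\{AX : A \in M_n(F_q)\}$ and using associativity. The only (cosmetic) difference is that the paper routes the forward inclusion through $AP^{-1} \in I_X$, whereas you observe directly that $A(XP) = (AX)P$, correctly noting that invertibility of $P$ is not actually needed for this equality.
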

 \begin{proof}
 Let $A \in I_{XP}$ be an arbitrary element of $M_n(F_q)$. Then there exists $C \in M_n(F_q)$ such that $A = CXP$. It follows that $AP^{-1} \in I_X$. Consequently, $A \in I_XP$ implies that $I_{XP} \subseteq I_XP$. Now assume that $B \in I_XP$. Then there exists $D \in M_n(F_q)$ such that $B = DXP$. Therefore, $B \in I_{XP}$. Thus, $I_{XP} = I_XP$.
 \end{proof}
 The next two lemmas are useful in the sequel. 
\begin{lemma}[{\cite[Lemma 2.2]{wang2017automorphisms}}]\label{leftidealgeneratedbysingle}
 Let $I$ be any left ideal of $M_n(F_q)$. Then there exists $X \in M_n(F_q)$ such that $I = [X]$. 
\end{lemma}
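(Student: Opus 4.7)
The plan is to exploit the standard correspondence between left ideals of $M_n(F_q)$ and subspaces of $F_q^n$ via row spaces, and then to produce a generator $X$ whose row space realises the given ideal. The key observation I would use first is that for any $X \in M_n(F_q)$, the rows of $AX$ are $F_q$-linear combinations of the rows of $X$, so $[X] = M_n(F_q)X$ is exactly the set of matrices $A$ with $S_A \subseteq S_X$. Thus, once I can identify a single subspace $V \subseteq F_q^n$ with $I = \{A : S_A \subseteq V\}$, the result follows by taking $X$ to be any matrix whose first $\dim V$ rows are a basis of $V$ and whose remaining rows are zero.

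Given the left ideal $I$, I would therefore define
\[
V \;=\; \sum_{A \in I} S_A,
\]
the subspace spanned by all rows of all elements of $I$. The inclusion $I \subseteq \{A : S_A \subseteq V\}$ is immediate from the definition of $V$. The substantive direction is the reverse inclusion, which relies on the left-ideal closure of $I$.

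For that direction I would use matrix units: for any $A \in I$ and any indices $s,t$, the product $E_{s,t}A$ lies in $I$ and has the $t$-th row of $A$ in position $s$ with all other rows zero. Thus if $v \in V$ is written as $v = \sum_j c_j u_j$ where each $u_j$ is the $t_j$-th row of some $A_j \in I$, then for any fixed $s$ the matrix $\sum_j c_j E_{s,t_j} A_j$ lies in $I$ and has $v$ in row $s$ and zeros elsewhere. Hence for every $v \in V$ and every row index $s$, the single-row matrix placing $v$ in row $s$ belongs to $I$. Any matrix $B$ with $S_B \subseteq V$ is the sum over $s$ of such single-row matrices (one for each of its rows), so $B \in I$.

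Putting the pieces together, $I = \{A : S_A \subseteq V\} = [X]$ for the chosen $X$ with $S_X = V$. The only real obstacle is the reverse inclusion above, and the matrix-unit trick neutralises it; dimension issues are handled cleanly by picking $X$ to have exactly $\dim V$ non-zero rows forming a basis of $V$.
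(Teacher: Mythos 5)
Your proof is correct. The paper itself offers no proof of this lemma---it is imported verbatim from \cite[Lemma 2.2]{wang2017automorphisms}---so there is nothing internal to compare against, but your argument is the standard one and is complete: the identification $[X]=M_n(F_q)X=\{A : S_A\subseteq S_X\}$, the definition of $V$ as the span of all rows of all elements of $I$, and the matrix-unit computation $E_{s,t}A\in I$ (which places any $v\in V$ into any prescribed row of a matrix of $I$ and hence yields the reverse inclusion $\{A: S_A\subseteq V\}\subseteq I$) together give $I=[X]$ for any $X$ with $S_X=V$, in full agreement with the row-space correspondence the paper uses elsewhere (cf.\ Lemmas \ref{lemma4} and \ref{lemma5}).
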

\begin{lemma}[{\cite[Lemma 2.2]{wang2017automorphisms}}]\label{lemma4}
 Let $I$ be any left ideal of $M_n(F_q)$. Then there exists $P \in M_n^*(F_q)$ such that $I = I_{E_rP}$, where $0 \leq r \leq n$ and $E_0 = 0$.
\end{lemma}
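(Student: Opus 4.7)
The plan is to reduce an arbitrary left ideal to one generated by $J_r$ up to a change of basis on the right, using the fact that left multiplication acts on rows. First, I would invoke Lemma \ref{leftidealgeneratedbysingle} to write $I = [X]$ for some $X \in M_n(F_q)$, and set $r := \operatorname{rank}(X)$, noting that $0 \leq r \leq n$ and that $r = 0$ handles the zero ideal with $E_0 = 0$ as stated.

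Next I would establish the row-space characterisation of a principal left ideal: for any $Y \in M_n(F_q)$, the $i^{\text{th}}$ row of $AY$ is a linear combination of the rows of $Y$ (with coefficients taken from the $i^{\text{th}}$ row of $A$), and conversely any matrix whose rows lie in $S_Y$ can be written as $AY$ for a suitable $A$. Hence
\[
[Y] \;=\; \{Z \in M_n(F_q) : S_Z \subseteq S_Y\}.
\]
In particular $[J_r]$ is the set of matrices whose rows lie in $\operatorname{span}(e_1,\ldots,e_r)$, i.e.\ matrices whose last $n-r$ columns vanish.

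The core step is to choose $P$ appropriately. Let $v_1,\ldots,v_r$ be a basis of the row space $S_X$, and extend it to a basis $v_1,\ldots,v_n$ of $F_q^n$. Let $P$ be the matrix whose $i^{\text{th}}$ row is $v_i$; then $P \in M_n^*(F_q)$. Using the preceding lemma we have $I_{J_r P} = I_{J_r} P$, so it suffices to check that $[J_r]\, P = [X]$. For any $Y \in [J_r]$, the rows of $YP$ lie in $\operatorname{span}(e_1 P,\ldots,e_r P) = \operatorname{span}(v_1,\ldots,v_r) = S_X$, hence $YP \in [X]$. Conversely, given $Z \in [X]$, each row of $Z$ lies in $\operatorname{span}(v_1,\ldots,v_r)$, so the rows of $ZP^{-1}$ lie in $\operatorname{span}(e_1,\ldots,e_r)$ (since $v_i P^{-1} = e_i$), giving $ZP^{-1} \in [J_r]$ and $Z \in [J_r]P$. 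Combining, $I = [X] = [J_r]\, P = I_{J_r P} = I_{E_r P}$.

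The only genuine subtlety I anticipate is the bookkeeping in the row-space description of $[Y]$: one must verify both inclusions carefully (the ``$\supseteq$'' direction requires constructing a concrete left multiplier whose row $i$ records the coefficients expressing row $i$ of $Z$ in terms of the rows of $Y$). Once that is in hand the construction of $P$ via basis extension is routine, and the $r = 0$ case falls out trivially since $[0] = \{0\} = I_{E_0 P}$ for any invertible $P$.
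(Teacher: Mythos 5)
Your proposal is correct. Note, however, that the paper does not prove this lemma at all: it is imported verbatim as a citation to Lemma~2.2 of the reference on automorphisms of the co-maximal ideal graph, so there is no in-paper argument to compare yours against. Your route --- write $I=[X]$ via Lemma~\ref{leftidealgeneratedbysingle}, identify $[Y]$ with $\{Z : S_Z\subseteq S_Y\}$, and then pick $P$ whose first $r$ rows are a basis of $S_X$ extended to a basis of $F_q^n$, so that $[J_r]P=[X]$ and $I_{J_rP}=I_{J_r}P$ by the paper's earlier lemma --- is the standard proof of this fact and all the steps check out, including the degenerate case $r=0$.
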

\begin{lemma}[{\cite[Lemma 3.3]{xu2020automorphism}}]\label{lemma5}
Let $X, Y \in M_n(F_q)$. Then $S_X = S_Y$ if and only if $I_X = I_Y$.
\end{lemma}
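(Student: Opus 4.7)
The plan is to establish the identification
\[
I_X = \{\, M \in M_n(F_q) : \text{every row of } M \text{ lies in } S_X \,\},
\]
from which both directions follow immediately. This key observation is purely a statement about how left multiplication by a matrix acts on rows.

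First I would verify this identification in two steps. For the inclusion $\subseteq$, take any $M = AX \in I_X$ with $A \in M_n(F_q)$; by the standard row-column rule, the $i^{\text{th}}$ row of $AX$ equals $\sum_{j} a_{ij}(\text{row}_j X)$, which is a linear combination of the rows of $X$ and therefore lies in $S_X$. For the inclusion $\supseteq$, suppose every row of $M$ lies in $S_X$. Then for each $i$ there exist scalars $a_{i1},\dots,a_{in} \in F_q$ such that $\text{row}_i M = \sum_{j} a_{ij}\,\text{row}_j X$. Assembling these scalars into a matrix $A = (a_{ij})$ yields $M = AX \in I_X$.

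Given the identification, the forward direction is trivial: if $S_X = S_Y$, then the set of matrices with all rows in $S_X$ coincides with the set of matrices with all rows in $S_Y$, so $I_X = I_Y$. For the backward direction, assume $I_X = I_Y$. Since $X = EX \in I_X = I_Y$, there exists $A \in M_n(F_q)$ with $X = AY$, so every row of $X$ is a linear combination of rows of $Y$, giving $S_X \subseteq S_Y$. Swapping the roles of $X$ and $Y$ yields the reverse inclusion, hence $S_X = S_Y$.

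There is no serious obstacle here; the only point that requires care is to make the ``row-level'' characterization of $I_X$ precise, since it is this bridge that converts an algebraic condition (membership in a left ideal generated by $X$) into a geometric one (containment of rows in $S_X$). Once this bridge is in place, both implications are one-line verifications using $X \in I_X$ and the definition of the row space.
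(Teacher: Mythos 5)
The paper does not prove this lemma at all---it is quoted without proof from \cite[Lemma 3.3]{xu2020automorphism}---so there is no in-paper argument to compare against. Your proof is correct and is the standard one: the identification $I_X = M_n(F_q)X = \{M : \text{each row of } M \text{ lies in } S_X\}$ is exactly the right bridge, both inclusions are verified correctly via the row-of-a-product formula, and the two directions of the equivalence then follow as you state (using $X = EX \in I_X$ for the converse).
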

\begin{lemma}
Let $X, Y \in M_n(F_q)$. Then $S_X \subset S_Y$ if and only if $I_X \subset I_Y$.
\end{lemma}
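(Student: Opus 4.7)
The plan is to prove both containments by reducing to containments of row spaces of matrices, and then invoke the previously stated Lemma~\ref{lemma5} to upgrade equality-preserving statements into strict containment statements.

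For the forward direction, I would assume $S_X \subset S_Y$ and first establish the non-strict implication $I_X \subseteq I_Y$. A typical element of $I_X$ has the form $AX$ for some $A \in M_n(F_q)$; its rows are $F_q$-linear combinations of the rows of $X$, hence they lie in $S_X \subseteq S_Y$. Therefore each row of $AX$ can be written as an $F_q$-linear combination of the rows of $Y$, and collecting these coefficients into a matrix $B \in M_n(F_q)$ gives $AX = BY \in I_Y$. This shows $I_X \subseteq I_Y$. To promote this to strict containment, I would argue by contradiction: if $I_X = I_Y$, then Lemma~\ref{lemma5} forces $S_X = S_Y$, contradicting the assumption $S_X \subset S_Y$.

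For the reverse direction, assume $I_X \subset I_Y$. Since $X = EX \in I_X \subseteq I_Y$, there exists $A \in M_n(F_q)$ with $X = AY$. Reading off rows, every row of $X$ is an $F_q$-linear combination of the rows of $Y$, so $S_X \subseteq S_Y$. Strictness again follows from Lemma~\ref{lemma5}: if $S_X = S_Y$, then $I_X = I_Y$, contradicting $I_X \subset I_Y$.

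There is no real obstacle here — the statement is essentially the ``strict'' refinement of Lemma~\ref{lemma5}, and the only substantive content is the standard bridge between left multiplication on $M_n(F_q)$ and row-space operations, namely that the rows of $AX$ are $F_q$-linear combinations of the rows of $X$. The two propernesses are immediate contrapositives of Lemma~\ref{lemma5}, so the proof reduces to one short paragraph in each direction.
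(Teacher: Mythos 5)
Your proof is correct and follows essentially the same route as the paper: establish the non-strict containment via the correspondence between left multiplication and row-space containment, then obtain strictness in each direction by contradiction using Lemma~\ref{lemma5}. Your reverse direction is in fact slightly cleaner than the paper's (which works with an arbitrary vector $a\in S_X$ rather than just noting $X=AY$), but the underlying argument is identical.
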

\begin{proof}
First suppose that $S_X \subset S_Y$ and let $A \in M_n(F_q)$ be an arbitrary element. Then $A\begin{pmatrix}
  X_1\\ 
  X_2\\
   :\\
  X_n
\end{pmatrix} = \begin{pmatrix}
  X_1^{'}\\ 
  X_2^{'}\\
   :\\
  X_n^{'}
\end{pmatrix}$, where $X_i \in F_q^n$. It follows that $X_i \in S_X$. Since $S_X \subset S_Y$ so that $X_i \in S_Y$. Therefore, there exists $B \in M_n(F_q)$ such that $BY =    \begin{pmatrix}
  X_1^{'}\\ 
  X_2^{'}\\
   :\\
  X_n^{'}
  \end{pmatrix}$. This implies $BY = AX$. Consequently, $AX \in   I_Y$ follows that 
$I_X \subseteq I_Y$. If $I_X = I_Y$, then by Lemma \ref{lemma5} $S_X = S_Y$, a contradiction. Thus, $I_X \subset I_Y$.
Conversely, suppose that $I_X \subset I_Y$ and let $a \in S_X$ such that $a = \sum_{i=1}^{n}c_iX_i$ where $X_i$ is the $i^{th}$ row of matrix $X$ and $c_i \in F_q$. It follows that \begin{center}
$[c_1 c_2 .... c_n]X = a$
\end{center}
Since $I_X \subset I_Y$ so $X \in I_Y$. It implies that $ \begin{pmatrix}
  c\\
  0
\end{pmatrix}X =\begin{pmatrix}
  a\\
  0
\end{pmatrix} \in I_Y$ and for $d \in F_q^n$ we get $\begin{pmatrix}
  d\\
  0
\end{pmatrix}Y = \begin{pmatrix}
  a\\
  0
\end{pmatrix}$. Therefore, $a \in S_Y$.
Thus, $S_X \subseteq S_Y$. If $S_X = S_Y$ then by Lemma \ref{lemma5} $I_X = I_Y$, again a contradiction.
Hence, $S_X \subset S_Y$.
\end{proof}

Let $\Omega$ be the set of $r$ linearly independent $n-\text{dimensional}$ vectors belongs to $F_q^n$ and let $E_\Omega$ be the matrix such that it's first $|\Omega|$ rows are from set $\Omega$ and rest all rows are $0$ vectors. Let us denote $I_\Omega$ as the left ideal generated by $E_\Omega$ and let $I$ be a left ideal of $M_n(F_q)$. By Lemma \ref{leftidealgeneratedbysingle}, we have, $I = I_X$ for some $X \in M_n(F_q)$. Let $\Omega$ be the set of maximal linearly independent row vectors of $X$. Then $S_X = S_{E_\Omega}$ so that $I_X = I_{E_\Omega}$ and  $I = I_\Omega$. From the above discussion, it is easy to observe the following lemma.

\begin{lemma}\label{independentrows}
Let $I$ be a left ideal of $M_n(F_q)$. Then there exists $\Omega \subseteq F_q^n$ such that $\Omega$ is a set of linearly independent row vectors and $I = I_\Omega$.
\end{lemma}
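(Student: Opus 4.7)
The plan is to leverage the two preceding lemmas, namely Lemma \ref{leftidealgeneratedbysingle} (every left ideal of $M_n(F_q)$ is principal) and Lemma \ref{lemma5} (two matrices generate the same left ideal if and only if they span the same row space), together with the simple observation that the row space of a matrix is determined by any maximal linearly independent subset of its rows. In fact, the paragraph immediately preceding the lemma already contains the germ of the argument; the task is just to present it as a self-contained proof.

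First, I would apply Lemma \ref{leftidealgeneratedbysingle} to write $I = I_X$ for some $X \in M_n(F_q)$. Next, let $\Omega = \{v_1, \ldots, v_r\} \subseteq F_q^n$ be a maximal linearly independent subset of the rows of $X$, where $r$ equals the row rank of $X$ (we allow $r = 0$ in case $X = 0$, where $\Omega = \varnothing$ and $E_\Omega = 0$). Form the matrix $E_\Omega \in M_n(F_q)$ whose first $r$ rows are $v_1, \ldots, v_r$ and whose remaining rows are zero.

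The key step is to observe that $S_X = S_{E_\Omega}$. The inclusion $S_{E_\Omega} \subseteq S_X$ is immediate, because each $v_i$ is a row of $X$ and hence lies in $S_X$. Conversely, by maximality of $\Omega$, every row of $X$ is an $F_q$-linear combination of $v_1, \ldots, v_r$, so every row of $X$ lies in $S_{E_\Omega}$, giving $S_X \subseteq S_{E_\Omega}$. Having established $S_X = S_{E_\Omega}$, Lemma \ref{lemma5} yields $I_X = I_{E_\Omega}$. By the convention introduced just before the lemma, $I_{E_\Omega} = I_\Omega$, so $I = I_\Omega$, as required.

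I do not anticipate a genuine obstacle here: everything reduces to the standard linear-algebra fact that a set of vectors and any maximal linearly independent subset of it span the same subspace, and then Lemma \ref{lemma5} does the translation back to left ideals. The only minor point of care is the degenerate case $I = 0$, which is handled by taking $\Omega = \varnothing$ and interpreting $E_\Omega$ as the zero matrix so that $I_\Omega = \{0\}$.
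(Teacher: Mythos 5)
Your proposal is correct and follows exactly the argument the paper gives in the paragraph preceding the lemma: write $I = I_X$ via Lemma \ref{leftidealgeneratedbysingle}, take $\Omega$ to be a maximal linearly independent set of rows of $X$, observe $S_X = S_{E_\Omega}$, and conclude $I_X = I_{E_\Omega} = I_\Omega$ by Lemma \ref{lemma5}. Your explicit treatment of the degenerate case $I = 0$ is a small tidiness the paper omits, but the route is the same.
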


Let $E_{e_i}$ be the matrix such that its first row is $e_i \in F_q^n$ and rest all rows are $0$ for $1 \leq i \leq n$. Let $X \in M_n(F_q)$. Then by Lemma \ref{independentrows}, there exists $\Omega \subseteq F_q^n$ such that $I_X = I_\Omega$. From this we get that $rank(X) = rank(E_\Omega) = |\Omega|$. In the similar manner to $E_{e_i}$, define $E_a$ to be the matrix with its first row as $a \in F_q^n$ and rest all rows are $0$. Clearly  $rank(E_a) = 1$. Let $I$ be a left ideal of $M_n(F_q)$. Then $I$ is a vector space as we can write $aX = (aE)\cdot X \in I$, where $a \in F_q$  and  $X \in I$. Thus, dimension of $I$ is well defined and we claim that $\dim(I) = n\cdot rank(I)$, where  $rank(I) = rank(X)$ such that $I_X = I$. It follows that $I = [E_rP]$, where $P \in M_n^*(F_q)$. Therefore, by Lemma \ref{leftidealgeneratedbysingle}, we have $rank(I) = rank(E_rP) = r$. Notice that  $I_{E_r}$ is a vector space with bases as $\{E_{i,j}\ |\ 1\leq i \leq n\  and \ 1 \leq  j \leq r\}$. Thus, $\dim(I_{E_r}) = nr = n\cdot rank(I)$ and vector space $I \cong I_{E_r}$ as $I = \varphi_P(E_r)$. By Lemma \ref{varphi}, we get that $\varphi_P$ is bijective and one can verify that $\varphi_P$ is a linear transformation. Thus, $\dim(I) = \dim(I_{E_r}) = n\cdot rank(I)$.

\begin{lemma}
 Let $X,Y \in M_n(F_q)$ such that $rank(X) < rank(Y)$. Then $d_i(X) < d_i(Y)$ and $d_o(X) > d_o(Y)$. Moreover
 \begin{enumerate}
    \item[\rm(i)] $d_i(X) = d_i(Y)$ if and only if $rank(X) = rank(Y)$ 
     \item[\rm(ii)]$d_o(X) = d_o(Y)$ if and only if $rank(X) = rank(Y)$.
 \end{enumerate}
 \end{lemma}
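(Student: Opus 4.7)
The plan is to translate the question into one about row spaces using the preceding lemma, which says $[U] \subsetneq [V]$ iff $S_U \subsetneq S_V$. With this,
\[
d_i(X) = |\{U \in M_n(F_q) : S_U \subsetneq S_X\}|, \qquad d_o(X) = |\{U \in M_n(F_q) : S_X \subsetneq S_U\}|,
\]
so both quantities are determined entirely by the subspace $S_X$ (not by $X$ itself).

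I would next observe that these counts depend only on $r := rank(X) = \dim S_X$. If $X_1$ and $X_2$ have equal rank $r$, then there is an invertible $P \in M_n^*(F_q)$ with $S_{X_1}P = S_{X_2}$ (any two $r$-dimensional subspaces of $F_q^n$ are related by an invertible linear map), and since $S_{UP} = S_U \cdot P$, the map $U \mapsto UP$ is a bijection between $\{U : S_U \subsetneq S_{X_1}\}$ and $\{U : S_U \subsetneq S_{X_2}\}$, and similarly for the containments in the other direction. Alternatively, Lemma~\ref{varphi} directly implies that the automorphism $\varphi_P$ preserves in- and out-degrees. Hence, when $rank(X) = r < s = rank(Y)$, I may replace $X$ and $Y$ by the canonical representatives $E_r$ and $E_s$, which satisfy $S_{E_r} \subsetneq S_{E_s}$.

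The strict inequalities then follow by transitivity of proper containment. Any $U$ with $S_U \subsetneq S_{E_r}$ satisfies $S_U \subsetneq S_{E_s}$, so $N_i(E_r) \subseteq N_i(E_s)$; the matrix $U = E_r$ itself lies in $N_i(E_s) \setminus N_i(E_r)$, so the inclusion is strict and $d_i(X) < d_i(Y)$. Symmetrically, $N_o(E_s) \subseteq N_o(E_r)$ with the witness $U = E_s$, which gives $d_o(X) > d_o(Y)$. Parts (i) and (ii) are now immediate: their forward directions are the contrapositives of the strict inequalities just proved, while their reverse directions are the rank-invariance of $d_i$ and $d_o$ established in the preceding step.

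The main (minor) obstacle is the verification that $d_i$ and $d_o$ are genuinely rank-invariant; once this is in place the core of the argument is a one-line transitivity observation together with the explicit witnesses $E_r$ and $E_s$ that make the inclusions strict.
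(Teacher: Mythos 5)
Your proof is correct and follows essentially the same route as the paper's: both reduce $X$ and $Y$ to the canonical representatives $E_r$ and $E_s$ via the automorphism $\varphi_P$ (equivalently, the bijection $U \mapsto UP$), then obtain the strict inequality from the inclusion $N_i(E_r) \subseteq N_i(E_s)$ with the explicit witness $E_r$, and deduce (i) and (ii) by contraposition together with rank-invariance of the degrees. The row-space translation you use at the outset is already available as an earlier lemma in the paper and does not change the substance of the argument.
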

\begin{proof}
\rm(i) If $rank(X) = s$ and $rank(Y) = t$, then $s < t$. Assume that $[X] = [E_sA]$ and $[Y] = [E_tB]$, where $A, B \in M_n^*(F_q)$. It follows that $d_i(X) = d_i(E_sA)$ so that $d_i(X) = d_i(E_s)$ because $\varphi_A(E_s) = E_sA$. Similarly, we have $d_i(Y) = d_i(E_t)$. Next, we need to prove that $d_i(E_s) < d_i(E_t)$. Now  $N_i(E_s) \subset N_i(E_t)$ as $E_s \in N_i(E_t)$ but $E_s \notin N_i(E_s)$. Then $d_i(E_s) < d_i(E_t)$ implies that $d_i(X) < d_i(Y)$. 
Further, assume that $d_i(X) = d_i(Y)$. From the above discussion, if we assume that $rank(X) \neq rank(Y)$, then $ d_i(X) \neq d_i(Y)$ which is a contradiction. Thus, $rank(X) = rank(Y)$. 
Now let $rank(X) = rank(Y) = r$. This implies that $d_i(X) = d_i(E_r) = d_i(Y)$. Thus the result holds. 

\rm(ii) The proof is similar to Part (i).
\end{proof} 

\begin{corollary}\label{rank}
Let $\Psi \in Aut(\overrightarrow{\Gamma_{l-i}}(M_n(F_q)))$ and $X \in M_n(F_q)$. Then $rank(\Psi(X)) = rank(X)$.
 \end{corollary}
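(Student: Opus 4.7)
The plan is to derive this corollary directly from the previous lemma, using the elementary fact that any automorphism of a directed graph preserves the in-degree (and the out-degree) of every vertex. For $\Psi \in Aut(\overrightarrow{\Gamma_{l-i}}(M_n(F_q)))$ and $X \in M_n(F_q)$, the bijection $\Psi$ sends $N_i(X) = \{U : U \rightarrow X\}$ bijectively onto $N_i(\Psi(X)) = \{U : U \rightarrow \Psi(X)\}$, since by the definition of a graph automorphism $U \rightarrow X$ is equivalent to $\Psi(U) \rightarrow \Psi(X)$. Consequently $d_i(\Psi(X)) = d_i(X)$, and analogously $d_o(\Psi(X)) = d_o(X)$.

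Now assume, for contradiction, that $rank(\Psi(X)) \neq rank(X)$. If $rank(\Psi(X)) < rank(X)$, then the previous lemma forces $d_i(\Psi(X)) < d_i(X)$; if $rank(\Psi(X)) > rank(X)$, the same lemma (applied with the roles of $X$ and $\Psi(X)$ swapped) forces $d_i(\Psi(X)) > d_i(X)$. Either case contradicts the equality $d_i(\Psi(X)) = d_i(X)$ just established, and therefore $rank(\Psi(X)) = rank(X)$.

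The substantive work has already been carried out in the preceding lemma, where the strict monotonicity of the in-degree in the rank was established by reducing to the canonical representatives $E_s$ and $E_t$ via the automorphisms $\varphi_A$ and $\varphi_B$. So this corollary has no real obstacle; the only point to verify is the standard observation that graph automorphisms preserve in-degrees, which is immediate from the definition of $Aut(\overrightarrow{\Gamma_{l-i}}(M_n(F_q)))$. One could equally well argue with $d_o$ in place of $d_i$, since the preceding lemma records the analogous strict inequality for out-degrees.
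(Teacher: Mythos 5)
Your proof is correct and is exactly the argument the paper intends (the paper states this as an immediate corollary of the preceding degree--rank lemma without writing out the details): automorphisms preserve in-degrees, and by part (i) of that lemma the in-degree determines the rank, so $d_i(\Psi(X))=d_i(X)$ forces $rank(\Psi(X))=rank(X)$. Your explicit verification that $\Psi$ maps $N_i(X)$ bijectively onto $N_i(\Psi(X))$ is the one step the paper leaves implicit, and it is right.
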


If $n = 2$ then $rank(X) = 0, 1, 2$,  where $X \in M_2(F_q)$. Note that 
$rank(X) = 0$ if and only if  $[X] = [0]$. Also, 
$rank(X) = 2$ if and only if  $[X] = M_2(F_q)$.

Let $R_i = \{X\ |\ X \in M_2(F_q) \ and \ rank(X) = i\}$. Thus for $X,Y \in M_2(F_q)$, it is clear that $X\rightarrow Y$ if and only if $rank(X) < rank(Y)$. The same has been stated in the following lemma.

\begin{lemma}\label{rankm2}
For $X,Y \in M_2(F_q)$, $X \rightarrow Y$ if and only if $rank(X) < rank(Y)$.
\end{lemma}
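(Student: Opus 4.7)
The plan is to exploit the fact that in $M_2(F_q)$ there are only three possible ranks, namely $0$, $1$, and $2$, and the extreme ranks correspond to the two comparable ``trivial'' left ideals already identified in the paragraph preceding the lemma: $rank(X)=0$ iff $[X]=[0]$ and $rank(X)=2$ iff $[X]=M_2(F_q)$. This makes the proof essentially a three-case enumeration rather than a structural argument about arbitrary ideals.

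For the $(\Leftarrow)$ direction, I would assume $rank(X)<rank(Y)$ and split into the three cases $(rank(X),rank(Y))\in\{(0,1),(0,2),(1,2)\}$. In the two cases where $rank(X)=0$, we have $[X]=[0]$, which is properly contained in any nonzero left ideal; since $rank(Y)\geq 1$ forces $[Y]\neq [0]$, we get $[X]\subsetneq [Y]$. In the remaining case $(1,2)$, we have $[Y]=M_2(F_q)$ while $[X]$ is a proper left ideal (because $rank(X)=1\neq 2$), so again $[X]\subsetneq [Y]$. In every case $X\rightarrow Y$.

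For the $(\Rightarrow)$ direction, I would assume $X\rightarrow Y$, i.e.\ $[X]\subsetneq [Y]$. The discussion immediately after Lemma~\ref{independentrows} (using Lemma~\ref{leftidealgeneratedbysingle} together with the linear bijection $\varphi_P$ from Lemma~\ref{varphi}) shows that any left ideal $I$ of $M_n(F_q)$ is an $F_q$-vector space of dimension $n\cdot rank(I)$. Proper inclusion of ideals therefore implies a strict inequality of dimensions, and dividing by $n=2$ yields $rank(X)<rank(Y)$. Alternatively, one can invoke the preceding rank-monotonicity lemma on in-degrees to reach the same conclusion.

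The main ``obstacle'' is really just recognizing that the forward implication is special to $n=2$: it is crucial that the only intermediate rank is $1$ and that both $rank=0$ and $rank=n$ yield the extreme comparable ideals $[0]$ and $M_2(F_q)$. I would make sure the case split is stated explicitly so that the reader sees this cannot be blindly extended to $n\geq 3$, where two rank-$r$ matrices with $0<r<n$ can generate incomparable left ideals.
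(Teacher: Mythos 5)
Your proof is correct and matches the paper's (essentially unstated) argument: the paper derives the lemma directly from the preceding observations that $rank(X)=0$ iff $[X]=[0]$ and $rank(X)=2$ iff $[X]=M_2(F_q)$, which is exactly your three-case analysis for $(\Leftarrow)$, while your $(\Rightarrow)$ via $\dim([X])=2\cdot rank(X)$ supplies the detail the paper leaves implicit. One small slip in your closing remarks: it is the $(\Leftarrow)$ direction, not the forward implication, that is special to $n=2$ --- the dimension count shows $X\rightarrow Y$ implies $rank(X)<rank(Y)$ for every $n$ --- and the failure of $(\Leftarrow)$ for $n\geq 3$ comes from matrices of \emph{distinct} intermediate ranks whose row spaces are not nested, not from two matrices of the same rank $r$.
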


Define a mapping $\rho$ from $M_2(F_q)$ to itself such that it permute vertices in $R_i$ for each $i = 0, 1, 2$. Observe that $N_i(X) = N_i(Y)$ and $N_o(X) = N_o(Y)$ for each $ X,Y \in R_i$. Thus, we are in the shape to derive the following lemma.

\begin{lemma}\label{lemma10}
Let $\rho$ be a mapping as defined above. Then $\rho \in Aut(\overrightarrow{\Gamma_{l-i}}(M_2(F_q)))$.
\end{lemma}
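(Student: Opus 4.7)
The plan is to reduce everything to Lemma \ref{rankm2}, which tells us that in the very special setting $n=2$, the directed edge relation on $M_2(F_q)$ depends only on the rank of the two matrices involved. Once this is in hand, showing that a rank-preserving permutation is an automorphism is essentially a one-line check.

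First I would observe that $\rho$ is bijective by construction: it is defined as a permutation on each of the three disjoint rank classes $R_0, R_1, R_2$, and these classes partition $M_2(F_q)$, so the induced map on $M_2(F_q)$ is a bijection. In particular $\text{rank}(\rho(X)) = \text{rank}(X)$ for every $X \in M_2(F_q)$, since $\rho$ sends $R_i$ to $R_i$.

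Next I would verify that $\rho$ preserves the directed edge relation. Let $X, Y \in M_2(F_q)$ be distinct. By Lemma \ref{rankm2}, $X \rightarrow Y$ if and only if $\text{rank}(X) < \text{rank}(Y)$. Applying the rank-preservation observation to both $X$ and $Y$, this is equivalent to $\text{rank}(\rho(X)) < \text{rank}(\rho(Y))$, and invoking Lemma \ref{rankm2} a second time gives that this in turn is equivalent to $\rho(X) \rightarrow \rho(Y)$. Hence $\rho$ preserves directed edges in both directions, and is therefore an automorphism of $\overrightarrow{\Gamma_{l-i}}(M_2(F_q))$.

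There is essentially no obstacle here; the entire content is packaged into Lemma \ref{rankm2}, which trivialises the automorphism question in the $n=2$ case by collapsing the adjacency structure to a three-level layered digraph with complete bipartite-like connections between consecutive rank levels. The only point worth being careful about is to justify once, cleanly, that distinctness of $X$ and $Y$ is preserved by $\rho$ (immediate from bijectivity) so that the ``distinct $y$'' clause in the definition of $\overrightarrow{\Gamma_{l-i}}$ is not violated; apart from this the proof is essentially immediate.
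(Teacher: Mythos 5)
Your proof is correct and takes essentially the same route as the paper: bijectivity because $\rho$ permutes each rank class $R_i$, and preservation of directed edges by combining Lemma \ref{rankm2} with the fact that $\rho$ preserves rank. If anything your version is slightly cleaner, since you obtain $\operatorname{rank}(\rho(X))=\operatorname{rank}(X)$ directly from the definition of $\rho$, whereas the paper cites Corollary \ref{rank} for this --- a statement about maps already known to be automorphisms, which is mildly circular at that point.
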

\begin{proof}
Since $\rho$ permute vertices in $R_i$, thus it is bijective. Let $X \rightarrow Y$, where $X,Y \in M_2(F_q)$. Then by Lemma \ref{rankm2}, we have $rank(X) < rank(Y)$ and  $rank(\rho(X)) = rank(X)$ and $rank(\rho(Y)) = rank(Y)$ from Corollary \ref{rank}. Therefore, by Lemma \ref{rankm2}, we have $rank(\rho(X)) < rank(\rho(Y))$ implies that $\rho(X) \rightarrow \rho(Y)$. Thus, $\rho \in Aut(\overrightarrow{\Gamma_{l-i}}(M_2(F_q)))$.
\end{proof}

Hence, in what follows, we shall assume  $n \geq 3$.
\begin{lemma}\label{lemma11}
For $\Psi \in Aut(\overrightarrow{\Gamma_{l-i}}(M_n(F_q)))$,  there exists $P \ \in M_n^*(F_q)$ such that $[(\varphi_P\cdot \Psi)(E_{e_1})] = [E_{e_1}]$.
\end{lemma}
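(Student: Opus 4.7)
The plan is to exploit two facts already established: (a) automorphisms preserve rank (Corollary \ref{rank}), and (b) every left ideal of $M_n(F_q)$ has the form $I_{E_r Q}$ with $Q \in M_n^*(F_q)$ and $0 \le r \le n$ (Lemma \ref{lemma4}). The target $E_{e_1}$ is nothing but $E_1 = E_{1,1}$, so $[E_{e_1}] = I_{E_1}$ and $\operatorname{rank}(E_{e_1}) = 1$.

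First I would observe, via Corollary \ref{rank}, that $\operatorname{rank}(\Psi(E_{e_1})) = \operatorname{rank}(E_{e_1}) = 1$. Applying Lemma \ref{lemma4} to the left ideal $[\Psi(E_{e_1})]$, there exists $Q \in M_n^*(F_q)$ and some $0 \le r \le n$ with $[\Psi(E_{e_1})] = I_{E_r Q}$; but since $\operatorname{rank}(E_r Q) = r$ and ranks must match by the discussion preceding Corollary \ref{rank} (identifying rank with the invariant $\dim(I)/n$), we must have $r = 1$. Hence $[\Psi(E_{e_1})] = I_{E_1 Q}$ for some invertible $Q$.

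Next I would set $P := Q^{-1}$ and verify the required equality. Using the identity $I_{XP} = I_X P$ proved in the lemma just after Lemma \ref{varphi}, we compute
\begin{equation*}
[(\varphi_P \cdot \Psi)(E_{e_1})] = [\Psi(E_{e_1}) P] = I_{\Psi(E_{e_1})}\, P = I_{E_1 Q}\, P = I_{E_1}\, Q P = I_{E_1} = [E_{e_1}],
\end{equation*}
which is exactly what is needed.

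There is really no main obstacle here; the statement is essentially a bookkeeping consequence of Lemmas \ref{varphi}, \ref{lemma4}, the identity $I_{XP} = I_X P$, and Corollary \ref{rank}. The only delicate point is making sure one picks $P = Q^{-1}$ (and not $Q$), and properly invoking the rank-preservation to force $r = 1$ in the application of Lemma \ref{lemma4}; both are immediate once the framework is set up.
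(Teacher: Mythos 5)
Your proof is correct and follows essentially the same route as the paper: both reduce to the fact that $\operatorname{rank}(\Psi(E_{e_1}))=1$ (Corollary \ref{rank}) and then normalize the resulting rank-one left ideal to $[E_{e_1}]$ by right multiplication with an invertible matrix, using $I_{XP}=I_XP$. The only difference is that the paper writes $P$ down explicitly as a product of elementary column operations acting on the generating row vector $a$, whereas you obtain the invertible matrix from the normal form of Lemma \ref{lemma4} and invert it; both are valid.
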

\begin{proof}
We know that $rank(E_{e_1}) = 1$ then $rank(\Psi(E_{e_1})) = 1$. Assume that $\Psi(E_{e_1}) = X$ for some $X \in M_n(F_q)$ such that $[\Psi(E_{e_1})] = [E_a]$, where $0\neq a \in F_q^n$. Let $a_k$ be the first non-zero element in row vector $a$. Define \begin{center}
$P = (E-\sum_{i\neq k}a_k^{-1}a_iE_{k,i})E(k(a_k^{-1}))E(1,k)$.
\end{center} 
Note that $P \in M_n^*(F_q)$. Moreover, $[\Psi(E_{e_1})]P = [E_a]P$ which implies that $[\Psi(E_{e_1})P] = [E_aP]$. Therefore, $[(\varphi_P\cdot \Psi)(E_{e_1})] = \begin{pmatrix}
  e_1\\
  0
\end{pmatrix}$ and hence $[(\varphi_P\cdot \Psi)(E_{e_1})] = [E_{e_1}]$.
\end{proof}

\begin{lemma}\label{lemma12}
Let $\Psi \in Aut(\overrightarrow{\Gamma_{l-i}}(M_n(F_q)))$ such that $[E_{e_k}] = [\Psi(E_{e_k})]$, where $1 \leq k \leq i$. Then $[E_i] = [\Psi(E_i)]$.
\end{lemma}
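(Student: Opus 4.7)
The plan is to translate the hypothesis and conclusion into statements about row spaces via Lemma \ref{lemma5} and the preceding row-space lemma, and then exploit the fact that $\Psi$ preserves the directed edges in $\overrightarrow{\Gamma_{l-i}}(M_n(F_q))$. Concretely, for each $k \in \{1,\dots,i\}$ the row space of $E_{e_k}$ is $\mathrm{span}(e_k)$, which is properly contained in $\mathrm{span}(e_1,\dots,e_i) = S_{E_i}$. Applying the lemma that $S_X \subset S_Y$ if and only if $I_X \subset I_Y$, we obtain $[E_{e_k}] \subset [E_i]$, i.e.\ $E_{e_k} \to E_i$ in the directed graph.

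Since $\Psi$ is an automorphism, the directed edges are preserved, so $\Psi(E_{e_k}) \to \Psi(E_i)$ for every $k = 1,\dots, i$. This means $[\Psi(E_{e_k})] \subset [\Psi(E_i)]$. Using the hypothesis $[\Psi(E_{e_k})] = [E_{e_k}]$ and once again the lemma relating row spaces and left ideals, we deduce that $\mathrm{span}(e_k) = S_{\Psi(E_{e_k})} \subset S_{\Psi(E_i)}$ for each $k$. Therefore
\[
\mathrm{span}(e_1, e_2, \dots, e_i) \;\subseteq\; S_{\Psi(E_i)}.
\]

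To upgrade this inclusion to equality, I would invoke Corollary \ref{rank}, which gives $\mathrm{rank}(\Psi(E_i)) = \mathrm{rank}(E_i) = i$, and hence $\dim S_{\Psi(E_i)} = i$. Since $\mathrm{span}(e_1,\dots,e_i)$ already has dimension $i$ and sits inside $S_{\Psi(E_i)}$, the two spaces must coincide. Thus $S_{\Psi(E_i)} = S_{E_i}$, and one final application of Lemma \ref{lemma5} yields $[\Psi(E_i)] = [E_i]$.

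The argument is essentially a dimension count combined with the automorphism property, so I do not expect any real obstacle; the only subtlety is being careful that the hypothesis $[E_{e_k}] = [\Psi(E_{e_k})]$ controls $S_{\Psi(E_i)}$ only through the $e_k$'s individually, and the rank-preservation statement is what closes the gap between spanning a subspace and being equal to it.
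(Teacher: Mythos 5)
Your proof is correct and follows essentially the same route as the paper's: both use edge-preservation under $\Psi$ to obtain $[E_{e_k}] \subset [\Psi(E_i)]$ for each $k$ and then a dimension count via rank preservation to force equality, the only difference being that you phrase the count in terms of row spaces ($\dim = i$) while the paper works with the ideals as vector spaces ($\dim = n\cdot i$). One cosmetic point: for $i=1$ the containment $\mathrm{span}(e_1) \subset S_{E_1}$ is not proper, so your first step does not literally apply there; the paper dispatches $i=1$ separately via $E_{e_1}=E_1$, and the same trivial remark patches your argument.
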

\begin{proof}
For $i=1$, we have $E_{e_1} = E_1$, we are done. If $i\geq 2$, then  $[E_{e_k}] \subset [E_i]$. It implies that $E_{e_k}\rightarrow E_i$ and so $\Psi(E_{e_k}) \rightarrow \Psi(E_i)$. Thus, $[\Psi(E_{e_k})] \subset [\Psi(E_i)]$ so that $[E_{e_k}] \subset [\Psi(E_i)]$. It follows that $\sum_{k=1}^{i}[E_{e_k}] \subseteq [\Psi(E_i)]$. Therefore, $[E_i] \subseteq [\Psi(E_i)]$. Since $\dim([E_i]) = \dim([\Psi(E_i)]) = n\cdot i$ we get $[E_i] = [\Psi(E_i)]$.
\end{proof}

\begin{lemma}\label{lemma13}
Let $\Psi \in Aut(\overrightarrow{\Gamma_{l-i}}(M_n(F_q)))$ such that $[E_{e_k}] = [\Psi(E_{e_k})]$ and $1 \leq k \leq i-1$. Then there exists $P \in M_n^*(F_q)$ such that $[(\varphi_P\cdot \Psi)(E_{e_s})] = [E_{e_s}]$, where $1 \leq s \leq i$.
\end{lemma}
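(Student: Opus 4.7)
The plan is to produce $P \in M_n^*(F_q)$ as an explicit product of three elementary matrices, modelled on the construction in Lemma \ref{lemma11}, so that $\varphi_P$ preserves each of the left ideals $[E_{e_1}], \ldots, [E_{e_{i-1}}]$ and simultaneously carries $[\Psi(E_{e_i})]$ onto $[E_{e_i}]$. Since $\mathrm{rank}(\Psi(E_{e_i})) = 1$ by Corollary \ref{rank}, the row space of $\Psi(E_{e_i})$ is one-dimensional, so there is a nonzero vector $a = \sum_{j=1}^n a_j e_j \in F_q^n$ with $[\Psi(E_{e_i})] = [E_a]$.

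The key preliminary step is to show that $a \notin \mathrm{span}(e_1, \ldots, e_{i-1})$, i.e., some $a_m$ with $m \geq i$ is nonzero. For this I would apply Lemma \ref{lemma12} with index $i-1$ to obtain $[\Psi(E_{i-1})] = [E_{i-1}]$. Since $e_i \notin \mathrm{span}(e_1, \ldots, e_{i-1})$, the vertices $E_{e_i}$ and $E_{i-1}$ are not joined by any arc of $\overrightarrow{\Gamma_{l-i}}(M_n(F_q))$, so the same holds for their $\Psi$-images. Hence $[E_a]$ and $[E_{i-1}]$ are incomparable; the inclusion $[E_{i-1}] \subseteq [E_a]$ is ruled out on rank grounds, so $[E_a] \not\subseteq [E_{i-1}]$, which is the desired non-containment.

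With such an index $m \geq i$ in hand, I would take
\[
P = \left(E - \sum_{j \neq m} a_m^{-1} a_j E_{m,j}\right) E(m(a_m^{-1})) \, E(i, m),
\]
a product of three invertible factors, so $P \in M_n^*(F_q)$. A direct row-vector computation then yields two identities: first, $aP = e_i$, by exactly the three-step mechanism from Lemma \ref{lemma11} — annihilate every coordinate of $a$ except the $m$-th, rescale that coordinate to $1$, and swap positions $i$ and $m$; second, $e_k P = e_k$ for each $k \in \{1, \ldots, i-1\}$, which is essentially automatic because $k < i \leq m$ forces $k \neq m$ and $k \neq i$, so none of the three factors acts nontrivially on $e_k$.

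These two identities close the argument quickly through the relation $[\varphi_P(X)] = [X] P$: for $k < i$,
\[
[(\varphi_P \cdot \Psi)(E_{e_k})] = [\Psi(E_{e_k})] P = [E_{e_k}] P = [E_{e_k P}] = [E_{e_k}],
\]
and likewise $[(\varphi_P \cdot \Psi)(E_{e_i})] = [E_a] P = [E_{aP}] = [E_{e_i}]$. The only conceptually nontrivial step is the non-containment $a \notin \mathrm{span}(e_1, \ldots, e_{i-1})$; once this is secured via Lemma \ref{lemma12} and the incomparability argument, the construction of $P$ is a direct adaptation of Lemma \ref{lemma11} and the verifications are routine linear algebra.
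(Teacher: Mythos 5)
Your proof follows the paper's argument essentially verbatim: the same use of Lemma \ref{lemma12} (via the non-containment of $[\Psi(E_{e_i})]$ in $[E_{i-1}]$) to locate a nonzero coordinate $a_m$ with $i \leq m \leq n$, the same three-factor matrix $P=\bigl(E-\sum_{j\neq m}a_m^{-1}a_jE_{m,j}\bigr)E(m(a_m^{-1}))E(i,m)$, and the same verification that $\varphi_P$ fixes each $[E_{e_k}]$ for $k<i$ while carrying $[E_a]$ to $[E_{e_i}]$. There are no substantive differences in approach.
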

\begin{proof}
Let $\Psi(E_{e_i}) = X$ for some $X \in M_n(F_q)$ such that $[\Psi(E_{e_i})] = [E_a]$, where $a (\neq 0) \in F_q^n$. Since $[E_{e_i}]$ is not a proper subset of $[E_{i-1}]$ then $[\Psi(E_{e_i})]$ is not a proper subset of $[\Psi(E_{i-1}]$. By Lemma \ref{lemma12}, we have $[E_{i-1}] = [\Psi(E_{i-1})]$. It follows that $[\Psi(E_{e_i})]$ is not a proper subset of $[E_{i-1}]$. Then there exists $ a_l (\neq 0) \in F_q$ such that $i \leq l \leq n$. Now define \begin{center}
$P = (E-\sum_{j\neq l}a_l^{-1}a_jE_{l,j})E(l(a_l^{-1}))E(i,l)$. 
\end{center}
Note that $P \in M_n^*(F_q)$. Moreover, $[\Psi(E_{e_i})]P = [E_a]P$ and  $[\Psi(E_{e_i})P] = [E_aP]$. Therefore,  $[(\varphi_P\cdot \Psi)(E_{e_i})] = \begin{pmatrix}
  e_i\\
  0
\end{pmatrix}$ so that $[(\varphi_P\cdot \Psi)(E_{e_i})] = [E_{e_i}]$. Now, $[\Psi(E_{e_k})]P = [E_{e_k}]P$, where $1 \leq k < i$,  implies that $[\Psi(E_{e_k})P] = [E_{e_k}P]$ and therefore, $[(\varphi_P\cdot \Psi)(E_{e_k})] = [\varphi_P(E_{e_k})]$. It is  easy to observe that $[\varphi_P(E_{e_k})] = [E_{e_k}]$. Thus, $[(\varphi_P\cdot \Psi)(E_{e_k})] = [E_{e_k}]$, where $1 \leq k \leq i-1$. Hence, $[(\varphi_P\cdot \Psi)(E_{e_s})] = [E_{e_s}]$, where $1 \leq s \leq i$.
\end{proof}   

\begin{corollary}\label{corollary14}
Let $\Psi \in Aut(\overrightarrow{\Gamma_{l-i}}(M_n(F_q)))$. Then there exists  $P \in M_n^*(F_q)$ such that $[(\varphi_P\cdot \Psi)(E_{e_k})] = [E_k]$ and $1 \leq k \leq n$.
\end{corollary}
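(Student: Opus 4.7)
The plan is a straightforward induction on $k$, iterating the construction in Lemma \ref{lemma13}. Note first that the conclusion as stated should read $[(\varphi_P\cdot\Psi)(E_{e_k})]=[E_{e_k}]$ (for $k\ge 2$ the ranks on the two sides of $[(\varphi_P\cdot\Psi)(E_{e_k})]=[E_k]$ cannot match, since $\varphi_P$ and $\Psi$ both preserve rank by Corollary \ref{rank}); the printed version then follows by applying Lemma \ref{lemma12} to $\varphi_P\cdot\Psi$.

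I would start with the base case $k=1$: Lemma \ref{lemma11} produces $P_1\in M_n^*(F_q)$ with $[(\varphi_{P_1}\cdot\Psi)(E_{e_1})]=[E_{e_1}]$. For the inductive step, assume that $P_{i-1}\in M_n^*(F_q)$ has been constructed so that $[(\varphi_{P_{i-1}}\cdot\Psi)(E_{e_k})]=[E_{e_k}]$ for every $1\le k\le i-1$. Set $\Psi':=\varphi_{P_{i-1}}\cdot\Psi$; this is again in $\mathrm{Aut}(\overrightarrow{\Gamma_{l-i}}(M_n(F_q)))$ because automorphisms form a group and $\varphi_{P_{i-1}}$ is an automorphism by Lemma \ref{varphi}. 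Applying Lemma \ref{lemma13} to $\Psi'$ yields $Q\in M_n^*(F_q)$ with $[(\varphi_Q\cdot\Psi')(E_{e_s})]=[E_{e_s}]$ for $1\le s\le i$.

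To combine the two left-multiplications into a single $\varphi_P$, I would verify the one-line identity
\begin{equation*}
(\varphi_Q\circ\varphi_{P_{i-1}})(X)=\varphi_Q(XP_{i-1})=XP_{i-1}Q=\varphi_{P_{i-1}Q}(X),
\end{equation*}
so setting $P_i:=P_{i-1}Q$ gives $\varphi_{P_i}\cdot\Psi=\varphi_Q\cdot\Psi'$ and advances the induction one step. After $n$ steps we obtain $P:=P_n\in M_n^*(F_q)$ with $[(\varphi_P\cdot\Psi)(E_{e_k})]=[E_{e_k}]$ for all $1\le k\le n$. Finally, if the formulation with $[E_k]$ on the right-hand side is desired, a direct application of Lemma \ref{lemma12} to the automorphism $\varphi_P\cdot\Psi$ upgrades the conclusion to $[(\varphi_P\cdot\Psi)(E_k)]=[E_k]$ for each $1\le k\le n$.

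There is essentially no obstacle: all the real work has already been done in Lemmas \ref{varphi}, \ref{lemma11}, \ref{lemma12}, and \ref{lemma13}. The only point requiring any care is bookkeeping of the matrices $P_i$ to ensure that the composition of the $\varphi$'s is again of the form $\varphi_P$, which is handled by the identity $\varphi_Q\circ\varphi_{P_{i-1}}=\varphi_{P_{i-1}Q}$ noted above.
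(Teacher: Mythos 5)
Your proof is correct and follows essentially the same route as the paper: establish the base case via Lemma \ref{lemma11}, iterate Lemma \ref{lemma13}, and collapse the composite $\varphi_{P_n}\cdots\varphi_{P_2}\cdot\varphi_{P_1}$ into the single map $\varphi_{P_1P_2\cdots P_n}$ using the identity $\varphi_{Q_2}\cdot\varphi_{Q_1}=\varphi_{Q_1Q_2}$. Your observation that the stated conclusion should read $[E_{e_k}]$ rather than $[E_k]$ is borne out by the paper's own proof, which establishes exactly $[(\varphi_P\cdot\Psi)(E_{e_k})]=[E_{e_k}]$.
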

\begin{proof} 
In view of Lemma \ref{lemma11}, there exists $P_1 \in M_n^*(F_q)$ such that $[(\varphi_{P_1}\cdot \Psi)(E_{e_1})] = [E_{e_1}]$. Then by Lemma \ref{lemma13}, we get $P_2 \in M_n^*(F_q)$ such that $[(\varphi_{P_2}\cdot \varphi_{P_1}\cdot \Psi)(E_{e_k})] = [E_{e_k}]$, where $k = 1, 2$. On the similar lines of proof of Lemma \ref{lemma13}, there exists $P_1, P_2, \cdots,  P_n$ such that $[(\varphi_{P_n}\cdot \cdots \varphi_{P_2}\cdot \varphi_{P_1}\cdot \Psi)(E_{e_k})] = [E_{e_k}]$ where $k = 1, 2, \cdots, n$. Assume that $P = P_1P_2 \cdots P_n$ and  $P \in M_n^*(F_q)$ and observe that $\varphi_{Q_2}\cdot \varphi_{Q_1} = \varphi_{Q_1Q_2}$, where $Q_1, Q_2 \in M_n^*(F_q)$. This implies that $\varphi_{P_n}\cdot \cdots \varphi_{P_2}\cdot \varphi_{P_1} = \varphi_{P_1P_2 \cdots P_n}$. Therefore, $[(\varphi_P\cdot \Psi)(E_{e_k})] = [E_{e_k}]$, where $k = 1,2, \cdots, n$.
\end{proof}

Let $\Delta = \{e_1, e_2, \cdots,  e_n\}$ be the set of all unit vectors of $F_q^n$.

\begin{lemma}\label{lemma15}
Let $\delta \subseteq \Delta$. If  $\Psi \in Aut(\overrightarrow{\Gamma_{l-i}}(M_n(F_q)))$ such that $[\Psi(E_{e_k})] = [E_{e_k}]$ and  $k = 1,2, \cdots, n$, then $[\Psi(E_{\delta})] = [E_{\delta}]$.
\end{lemma}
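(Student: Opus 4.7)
The plan is to mirror the argument of Lemma \ref{lemma12} almost verbatim, with $E_i$ replaced by $E_\delta$ and the sum ranging over the unit vectors that actually appear in $\delta$ instead of all of $e_1,\ldots,e_i$. Write $\delta=\{e_{k_1},\ldots,e_{k_m}\}$ with $m=|\delta|$. The case $m=1$ is immediate, since then $E_\delta=E_{e_{k_1}}$ and the hypothesis $[\Psi(E_{e_{k_1}})]=[E_{e_{k_1}}]$ is exactly the conclusion. So I may assume $m\ge 2$.

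First I would observe that for every $j$, the matrix $E_{e_{k_j}}$ has first row $e_{k_j}$ (a row that appears among the nonzero rows of $E_\delta$) and zeros elsewhere, so $[E_{e_{k_j}}]\subsetneq[E_\delta]$. Hence $E_{e_{k_j}}\to E_\delta$ in $\overrightarrow{\Gamma_{l-i}}(M_n(F_q))$, and since $\Psi$ is an automorphism we obtain $\Psi(E_{e_{k_j}})\to\Psi(E_\delta)$, that is $[\Psi(E_{e_{k_j}})]\subsetneq[\Psi(E_\delta)]$. Applying the hypothesis $[\Psi(E_{e_{k_j}})]=[E_{e_{k_j}}]$ gives $[E_{e_{k_j}}]\subset[\Psi(E_\delta)]$ for each $j=1,\ldots,m$.

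Summing over $j$ yields
\[
[E_\delta]=\sum_{j=1}^{m}[E_{e_{k_j}}]\subseteq[\Psi(E_\delta)],
\]
the first equality holding because $E_\delta$ has the $e_{k_j}$'s as its only nonzero rows so $S_{E_\delta}=\sum_j S_{E_{e_{k_j}}}$, and equality of the spanned subspaces forces equality of the generated left ideals by Lemma \ref{lemma5}.

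To upgrade this inclusion to equality I would invoke the dimension computation established just after Lemma \ref{independentrows}: for any left ideal $I$ of $M_n(F_q)$, $\dim(I)=n\cdot\operatorname{rank}(I)$. By Corollary \ref{rank}, $\operatorname{rank}(\Psi(E_\delta))=\operatorname{rank}(E_\delta)=m$, so $\dim[\Psi(E_\delta)]=nm=\dim[E_\delta]$. Combined with the inclusion above this forces $[\Psi(E_\delta)]=[E_\delta]$. There is no real obstacle here; the only thing to keep straight is that the proper containment $[E_{e_{k_j}}]\subsetneq[E_\delta]$ (which triggers the directed-edge argument) requires $m\ge 2$, which is why the $m=1$ case is separated at the outset.
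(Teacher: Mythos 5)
Your proof is correct and follows essentially the same route as the paper's: both handle $|\delta|=1$ separately, use the directed edge $E_{e_k}\to E_\delta$ and the hypothesis to get $[E_{e_k}]\subset[\Psi(E_\delta)]$ for each $e_k\in\delta$, sum these to obtain $[E_\delta]\subseteq[\Psi(E_\delta)]$, and conclude by the dimension count $\dim([E_\delta])=\dim([\Psi(E_\delta)])=n|\delta|$. Your extra justifications (the identification $[E_\delta]=\sum_j[E_{e_{k_j}}]$ via Lemma \ref{lemma5} and the rank preservation via Corollary \ref{rank}) are details the paper leaves implicit, not a different argument.
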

\begin{proof}
If $|\delta| = 1$ then $E_{\delta} = E_{e_k}$. Therefore, we assume that $|\delta|\geq 2$. Let $e_k \in \delta$. Then $[E_{e_k}] \subset [E_{\delta}]$ implies that $[\Psi(E_{e_k})] \subset [\Psi(E_{\delta})]$. Consequently, we obtain $[E_{e_k}] \subset [\Psi(E_{\delta})]$.
Let $I(\delta) = \{i\ |\ e_i \in \delta\}$. Then we have 
    $\sum_{k \in I(\delta)}[E_{e_k}] \subseteq [\Psi(E_{\delta})]$. This implies that $[E_{\delta}] \subseteq [\Psi(E_{\delta})]$. Since $\dim([E_{\delta}]) = \dim([\Psi(E_{\delta})]) = n\cdot |\delta|$ we get, $[E_{\delta}] = [\Psi(E_{\delta})]$.
\end{proof}

For any $X \in M_n(F_q)$ such that $rank(X) = 1$ we have $[X] = [E_a]$, where $a \in F_q^n$. Note that such representation of $[X]$ is not unique and there can be a $b \in F_q^n$ such that $[X] = [E_b]$. But if we place a condition that $a$ should be such that its first non-zero element be $1$ then we get a unique $a \in F_q^n$ such that $[X] = [E_a]$. In next few lemmas we will consider rank $1$ matrices of $M_n(F_q)$.

\begin{lemma}\label{lemma16}
Let $\Psi \in Aut(\overrightarrow{\Gamma_{l-i}}(M_n(F_q)))$ such that $[\Psi(E_{e_k})] = [E_{e_k}]$, where $k = 1,2, \cdots, n$. If $X \in M_n(F_q)$ such that $rank(X) = 1$, $[X] = [E_a]$ and $[\Psi(X)] = [E_b]$, where $a = (a_{1}, a_{2}, \cdots, a_{n}),~ b = (b_{1}, b_{2}, \cdots, b_{n}) \in F_q^n$, then $a_l = 0$ if and only if $b_l = 0$, where $1 \leq l \leq n$.
\end{lemma}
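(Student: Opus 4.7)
The plan is to translate the zero pattern of the coordinate vectors $a$ and $b$ into proper containment of ideals with the distinguished ideals $[E_{\delta_l}]$ for $\delta_l = \Delta \setminus \{e_l\}$, and then push this containment through $\Psi$ using Lemma \ref{lemma15}.

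First, I would fix $l$ with $1 \leq l \leq n$ and set $\delta_l = \{e_1, \ldots, e_{l-1}, e_{l+1}, \ldots, e_n\}$. The matrix $E_{\delta_l}$ has $n-1$ rows chosen from standard basis vectors and zeros elsewhere, so $S_{E_{\delta_l}} = \operatorname{span}(\delta_l) = \{v \in F_q^n : v_l = 0\}$ and $\operatorname{rank}(E_{\delta_l}) = n-1 \geq 2$ (since we assumed $n \geq 3$). Thus for any rank-$1$ matrix $Y$ with $[Y] = [E_c]$, we have $c \in S_{E_{\delta_l}}$ (equivalently $c_l = 0$) if and only if $S_{E_c} \subseteq S_{E_{\delta_l}}$, which by the lemma relating $S$-containment and $I$-containment is equivalent to $[E_c] \subseteq [E_{\delta_l}]$; and because the ranks $1$ and $n-1$ are unequal the containment is automatically strict whenever it holds.

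Now assume $a_l = 0$. By the observation above, $[X] = [E_a] \subsetneq [E_{\delta_l}]$, so $X \to E_{\delta_l}$ in $\overrightarrow{\Gamma_{l-i}}(M_n(F_q))$. Since $\Psi$ is an automorphism, $\Psi(X) \to \Psi(E_{\delta_l})$. By Lemma \ref{lemma15} applied to $\delta_l \subseteq \Delta$, $[\Psi(E_{\delta_l})] = [E_{\delta_l}]$, so $[E_b] = [\Psi(X)] \subsetneq [E_{\delta_l}]$. Applying the equivalence in the other direction gives $b \in S_{E_{\delta_l}}$, that is, $b_l = 0$.

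For the converse, suppose $b_l = 0$. The same argument shows $[\Psi(X)] = [E_b] \subsetneq [E_{\delta_l}] = [\Psi(E_{\delta_l})]$, hence $\Psi(X) \to \Psi(E_{\delta_l})$; since $\Psi \in \operatorname{Aut}(\overrightarrow{\Gamma_{l-i}}(M_n(F_q)))$ this pulls back to $X \to E_{\delta_l}$, i.e. $[E_a] \subsetneq [E_{\delta_l}]$, whence $a_l = 0$. I do not anticipate a serious obstacle: the argument is essentially a one-line application of Lemma \ref{lemma15} once the correct ``coordinate hyperplane'' ideal $[E_{\delta_l}]$ is identified; the only subtlety is ensuring strict containment, which is handed to us by the rank gap $1 < n-1$.
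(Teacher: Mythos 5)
Your proposal is correct and follows essentially the same route as the paper's proof: both reduce the question to whether $[E_a]$ (resp.\ $[E_b]$) is properly contained in the ``coordinate hyperplane'' ideal $[E_{\Delta\setminus\{e_l\}}]$, which is fixed by $\Psi$ via Lemma \ref{lemma15}. The only cosmetic difference is that you argue the converse directly by pulling the edge back through $\Psi^{-1}$, whereas the paper argues by contraposition from $a_l\neq 0$; your explicit attention to strictness of the containment (via the rank gap, using $n\geq 3$) is a minor point the paper leaves implicit.
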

\begin{proof}
Let $a_l = 0$. For each $Y \in [E_a]$, we have $l^{th}$ column of $Y = 0$. Let us denote $\Delta\backslash\{e_l\}$ as $\Delta_l$. Then $[E_a] \subset [E_{\Delta_l}]$ implies that $[\Psi(X)] \subset [\Psi(E_{\Delta_l})]$. Consequently, $[E_b] \subset [\Psi(E_{\Delta_l})]$. By Lemma \ref{lemma15}, we obtain $[E_{\Delta_l}] = [\Psi(E_{\Delta_l})]$. Therefore,  $[E_b] \subset [E_{\Delta_l}]$ which implies that $b_l = 0$.

Further suppose that $a_l \neq 0$. Then $[E_a] \not \subset [E_{\Delta_l}]$. It follows that $[\Psi(X)] \not\subset [\Psi(E_{\Delta_l})]$ and so $[E_b] \not \subset [\Psi(E_{\Delta_l})]$. By Lemma \ref{lemma15}, we get $[E_b] \not \subset [E_{\Delta_l}]$. Now, $b_l = 0$  gives $[E_b] \subset [E_{\Delta_l}]$, a contradiction. Thus, $b_l \neq 0$.
\end{proof}

\begin{lemma}\label{lemma17}
Let $\Psi \in Aut(\overrightarrow{\Gamma_{l-i}}(M_n(F_q)))$ such that $[\Psi(E_{e_k})] = [E_{e_k}]$ where $k = 1,2, \cdots, n$. Let $a_{i} = (a_{i1}, a_{i2}, \cdots, a_{in}) \in F_q^n$ such that $[X_i] = [E_{a_i}]$ where $X_i \in M_n(F_q)$ and $i = 1, 2$. Let $b_i = (b_{i1}, b_{i2}, \cdots, b_{in}) \in F_q^n$ such that $[\Psi(X_i)] = [E_{b_i}]$. Then $b_{1s}b_{2k} = b_{1k}b_{2s}$ if and only if $a_{1s}a_{2k} = a_{1k}a_{2s}$, where $ 1 \leq s < k \leq n$.
\end{lemma}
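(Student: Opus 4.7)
The plan is to recast the scalar identity $a_{1s}a_{2k}=a_{1k}a_{2s}$ as a purely graph-theoretic statement about $\overrightarrow{\Gamma_{l-i}}(M_n(F_q))$ that $\Psi$ will transport automatically. Write $\Delta_{sk}=\Delta\setminus\{e_s,e_k\}$, so that $E_{\Delta_{sk}}$ has rank $n-2$ with row space $\operatorname{span}\{e_j:j\neq s,k\}$. One may assume $X_1,X_2$ are nonzero (if $X_i=0$ then $\Psi(X_i)=0$ by Corollary~\ref{rank}, and both identities hold trivially), so both have rank $1$.

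The crux is the following equivalence $(\ast)$: for any rank-$1$ matrices $X_1',X_2'\in M_n(F_q)$ with $[X_i']=[E_{a_i'}]$,
\[
a_{1s}'a_{2k}'=a_{1k}'a_{2s}' \iff \text{there exists } Y\in M_n(F_q) \text{ with } rank(Y)=n-1,\ X_1'\to Y,\ X_2'\to Y,\ E_{\Delta_{sk}}\to Y.
\]
To prove $(\ast)$ I would pass to row spaces using Lemma~\ref{lemma5} together with its proper-inclusion analogue (the unnamed lemma immediately following Lemma~\ref{lemma5}). Existence of such a $Y$ amounts to existence of an $(n-1)$-dimensional subspace $S_Y\subseteq F_q^n$ containing $\operatorname{span}(a_1')$, $\operatorname{span}(a_2')$, and $\operatorname{span}\{e_j:j\neq s,k\}$; equivalently,
\[
\dim\bigl(\operatorname{span}\{e_j:j\neq s,k\}+\operatorname{span}\{a_1',a_2'\}\bigr)\le n-1,
\]
which in turn is equivalent to the coordinate pairs $(a_{1s}',a_{1k}')$ and $(a_{2s}',a_{2k}')$ being linearly dependent in $F_q^2$, i.e.\ $a_{1s}'a_{2k}'=a_{1k}'a_{2s}'$. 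For the forward direction I would pick such an $(n-1)$-dimensional $S_Y$ (adjoining $e_s$ to $\operatorname{span}\{e_j:j\neq s,k\}$ in the degenerate case where both pairs vanish) and produce an actual $Y$ with $S_Y$ as its row space via Lemma~\ref{leftidealgeneratedbysingle}. Strictness of the three containments $[X_i'],[E_{\Delta_{sk}}]\subsetneq[Y]$ is automatic from the rank comparisons $1,n-2<n-1$ (here $n\ge 3$ is essential).

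Finally I would transport $(\ast)$ through $\Psi$. By Corollary~\ref{rank} and the fact that $\Psi$ preserves directed edges, the map $Y\mapsto\Psi(Y)$ bijectively sends rank-$(n-1)$ matrices to rank-$(n-1)$ matrices while carrying the edges $X_i\to Y$ to $\Psi(X_i)\to\Psi(Y)$. By Lemma~\ref{lemma15} applied with $\delta=\Delta_{sk}$, $[\Psi(E_{\Delta_{sk}})]=[E_{\Delta_{sk}}]$; hence $E_{\Delta_{sk}}\to\Psi(Y)$ iff $\Psi(E_{\Delta_{sk}})\to\Psi(Y)$ iff $E_{\Delta_{sk}}\to Y$. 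Consequently the right-hand side of $(\ast)$ applied to $(X_1,X_2)$ holds iff the same condition applied to $(\Psi(X_1),\Psi(X_2))$ holds. Invoking $(\ast)$ on both sides using $[X_i]=[E_{a_i}]$ and $[\Psi(X_i)]=[E_{b_i}]$ gives the claimed biconditional.

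The main obstacle I anticipate is pinning down $(\ast)$ in degenerate configurations (e.g.\ when $(a_{is}',a_{ik}')=(0,0)$ for one or both $i$, or when $a_1'$ is a scalar multiple of $a_2'$): the dimension count remains valid in every case, but the explicit construction of the witness $Y$ requires slightly different choices of the $(n-1)$-th basis vector in $S_Y$. Once $(\ast)$ is secured, the automorphism transfer is a one-line diagram chase using Corollary~\ref{rank} and Lemma~\ref{lemma15}.
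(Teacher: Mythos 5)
Your proposal is correct and follows essentially the same route as the paper: the paper's witness is exactly your $Y$, namely the matrix $E_\alpha$ with row space spanned by $\{e_j : j\neq s,k\}\cup\{a_{1s}e_s+a_{1k}e_k\}$, and the determinant condition is likewise read off from whether $[E_{a_2}]$ lies inside this rank-$(n-1)$ left ideal. The only difference is packaging: the paper pins down $[\Psi(E_\alpha)]=[E_\beta]$ explicitly by a dimension count (after disposing of the vanishing-entry cases via Lemma~\ref{lemma16}), whereas you transport a symmetric existential condition through $\Psi$, which handles the degenerate configurations somewhat more uniformly.
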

\begin{proof}
 Suppose that $a_{1s}a_{2k} = a_{1k}a_{2s}$. If $a_{1s}a_{2k}a_{1k}a_{2s} = 0$ then, by Lemma \ref{lemma16}, $b_{1s}b_{2k}b_{1k}b_{2s} = 0$. Therefore, $a_{1s}a_{2k} = a_{1k}a_{2s}$ implies $b_{1s}b_{2k} = b_{1k}b_{2s}$. Thus, we assume $a_{1s}a_{2k}a_{1k}a_{2s} \neq 0$. By Lemma \ref{lemma16}, we have  $b_{1s}b_{2k}b_{1k}b_{2s} \neq 0$. Consider the sets 
\begin{equation*}
\begin{split}
\alpha = \{e_1, e_2, \cdots, a_{1s}e_s+a_{1k}e_k, \cdots, e_{k-1}, e_{k+1}, \cdots, e_n\}, \text{and}\\
\beta = \{e_1, e_2, \cdots, b_{1s}e_s+b_{1k}e_k,\cdots, e_{k-1}, e_{k+1}, \cdots, e_n\}
\end{split}
\end{equation*}
Note that $[E_{a_1}] \subset [E_\alpha]$. It follows that $[\Psi(X_1)] \subset [\Psi(E_\alpha)]$ and so  $[E_{b_1}] \subset [E_\alpha]$. And, $[E_{\Delta\backslash\{e_s, e_k\}}] \subset [E_\alpha]$ so $ [E_{\Delta\backslash\{e_s, e_k\}}] \subset [\Psi(E_\alpha)]$. Consequently, $[E_{\Delta\backslash\{e_s, e_k\}}] \ + \ [E_{b_1}]\subseteq [\Psi(E_\alpha)]$. Since $\dim([E_{\Delta\backslash\{e_s, e_k\}}] \ + \ [E_{b_1}]) = \dim([\Psi(E_\alpha)]) = n(n-1)$, we have  $[E_{\Delta\backslash\{e_s, e_k\}}] \ + \ [E_{b_1}] = [\Psi(E_\alpha)]$. It is easy to observe that $[E_{\Delta\backslash\{e_s, e_k\}}] \ + \ [E_{b_1}] = [E_\beta]$. Thus, $[E_\beta] = [\Psi(E_{\alpha})]$.

If $a_{1s}a_{2k} = a_{1k}a_{2s}$ then $[E_{a_2}] \subset [E_\alpha]$. It follows that $[X_2] \subset [E_\alpha]$ which implies that $[E_{b_2}] \subset [E_\beta]$. Therefore, $b_{1s}b_{2k} = b_{1k}b_{2s}$.

Now assume that $a_{1s}a_{2k} \neq a_{1k}a_{2s}$. It follows that $[E_{a_2}] \not\subset [E_\alpha]$. Consequently, $[X_2] \not\subset [E_\alpha]$ gives $[E_{b_2}] \not\subset [E_\beta]$. If $b_{1s}b_{2k} = b_{1k}b_{2s}$, then we get $[E_{b_2}] \subset [E_\beta]$ a contradiction. Thus, $b_{1s}b_{2k} \neq b_{1k}b_{2s}$.
\end{proof}

\begin{lemma}\label{lemma18}
Let $\Psi \in Aut(\overrightarrow{\Gamma_{l-i}}(M_n(F_q)))$. Then there exists $P \in M_n^*(F_q)$ such that $[(\varphi_P\cdot \Psi)(E_\mathbf{1})] = [E_\mathbf{1}]$ 
where $[E_\mathbf{1}] = \begin{pmatrix}
\mathbf{1}\\
0
\end{pmatrix}$ and $\mathbf{1} = (1,1,  \cdots, 1) \in F_q^n$ i.e. all-one vector. Also, $[(\varphi_P\cdot \Psi)(E_{e_k})] = [E_{e_k}]$, where $1 \leq k \leq n$.
\end{lemma}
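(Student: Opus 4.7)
The plan is to first reduce to the situation where $\Psi$ already fixes $[E_{e_1}], \dots, [E_{e_n}]$, and then to hit the image of $E_\mathbf{1}$ by a carefully chosen \emph{diagonal} automorphism $\varphi_{P_2}$, which will adjust $[\Psi(E_\mathbf{1})]$ to $[E_\mathbf{1}]$ without disturbing any of the $[E_{e_k}]$.

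More precisely, first I would apply Corollary \ref{corollary14} to obtain $P_1 \in M_n^*(F_q)$ with $[(\varphi_{P_1}\cdot\Psi)(E_{e_k})] = [E_{e_k}]$ for $1\le k\le n$, and set $\Psi_1 = \varphi_{P_1}\cdot\Psi$. Since $\mathrm{rank}(E_\mathbf{1}) = 1$, Corollary \ref{rank} gives $\mathrm{rank}(\Psi_1(E_\mathbf{1})) = 1$, so $[\Psi_1(E_\mathbf{1})] = [E_b]$ for some nonzero $b = (b_1,\dots,b_n) \in F_q^n$ (here one uses Lemma \ref{independentrows} applied to a rank-one ideal). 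Because all coordinates of $\mathbf{1}$ are nonzero, Lemma \ref{lemma16} applied to $\Psi_1$ (whose hypotheses on the $[E_{e_k}]$ are met) forces $b_k \ne 0$ for every $k$.

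Now define $P_2 = \mathrm{diag}(b_1^{-1},b_2^{-1},\dots,b_n^{-1}) \in M_n^*(F_q)$. A direct computation shows $E_b \cdot P_2 = E_\mathbf{1}$, hence $[\varphi_{P_2}(E_b)] = [E_\mathbf{1}]$, i.e.\ $[(\varphi_{P_2}\cdot\Psi_1)(E_\mathbf{1})] = [E_\mathbf{1}]$. Moreover, since $P_2$ is diagonal, the first (and only nonzero) row of $E_{e_k}\cdot P_2$ equals $b_k^{-1}e_k$, so $S_{E_{e_k}P_2} = S_{E_{e_k}}$ and Lemma \ref{lemma5} yields $[E_{e_k}P_2] = [E_{e_k}]$, giving $[(\varphi_{P_2}\cdot\Psi_1)(E_{e_k})] = [\varphi_{P_2}(E_{e_k})] = [E_{e_k}]$ for each $k$.

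Finally, setting $P = P_1P_2 \in M_n^*(F_q)$ and using the observation from the proof of Corollary \ref{corollary14} that $\varphi_{P_2}\cdot\varphi_{P_1} = \varphi_{P_1P_2}$, one concludes $\varphi_P\cdot\Psi = \varphi_{P_2}\cdot\Psi_1$ has both of the required fixed-point properties. The only real content is Step 2: verifying that $[\Psi_1(E_\mathbf{1})]$ is generated by a vector with \emph{no zero coordinate}; this is where Lemma \ref{lemma16} is indispensable, and everything else reduces to the fact that diagonal multiplication preserves each coordinate axis and can scale the all-ones row to any prescribed vector with no zero entries.
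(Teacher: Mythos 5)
Your proposal is correct and follows essentially the same route as the paper: apply Corollary \ref{corollary14} to fix the $[E_{e_k}]$, use Lemma \ref{lemma16} to see that $[\Psi_1(E_\mathbf{1})]=[E_b]$ with every $b_k\neq 0$, and then compose with $\varphi_{P_2}$ for the diagonal matrix $P_2=\mathrm{diag}(b_1^{-1},\dots,b_n^{-1})$. Your justification that the diagonal scaling leaves each $[E_{e_k}]$ unchanged (via $S_{E_{e_k}P_2}=S_{E_{e_k}}$ and Lemma \ref{lemma5}) is in fact slightly more explicit than the paper's, which simply asserts $[\varphi_{P_2}(E_{e_k})]=[E_{e_k}]$.
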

\begin{proof}
 In view of Corollary \ref{corollary14}, there exists a matrix $P_1 \in M_n^*(F_q)$ such that $[(\varphi_{P_1}\cdot \Psi)(E_{e_k})]=[E_{e_k}]$, where $1 \leq k \leq n$. Let $[(\varphi_{P_1}\cdot \Psi)(E_\mathbf{1})]  = [E_a]$, where $a \in F_q^n$. Then by Lemma \ref{lemma16}, $a_l \neq 0$ for each $1 \leq l \leq n$. 
Now, define $P_2 \in M_n^*(F_q)$ such that $P_2 = diag(a_1^{-1}, a_2^{-1}, \cdots, a_n^{-1})$. We obtain $[(\varphi_{P_1}\cdot \Psi)(E_\mathbf{1})]P_2\   =\  [E_a]P_2$. It follows that $[(\varphi_{P_1}\cdot \Psi)(E_\mathbf{1})P_2]  = [E_aP_2]$. It implies that $[(\varphi_{P_2}\cdot \varphi_{P_1}\cdot \Psi)(E_\mathbf{1})]  = [E_\mathbf{1}]$ so that $[(\varphi_{P_1P_2}\cdot \Psi)(E_\mathbf{1})] = [E_\mathbf{1}]$. Let $P = P_1P_2 \in M_n^*(F_q)$. Then $[(\varphi_P\cdot \Psi)(E_\mathbf{1})] = [E_\mathbf{1}]$. And, $[(\varphi_{P_1P_2}\cdot \Psi)(E_{e_k})] = [\varphi_{P_2}(E_{e_k})] = [E_{e_k}]$. Consequently, $[(\varphi_P\cdot \Psi)(E_{e_k})] = [E_{e_k}]$, where $ 1 \leq k \leq n$. Thus, $\varphi_P\cdot \Psi$ is the required automorphism.
\end{proof}
\begin{lemma}\label{lemma19}
Let $\Psi \in Aut(\overrightarrow{\Gamma_{l-i}}(M_n(F_q)))$ such that $[\Psi(E_{e_k})] = [E_{e_k}]$, where  $\ 1 \leq k \leq n$ and $[\Psi(E_\mathbf{1})] = [E_\mathbf{1}]$. Suppose $X \in M_n(F_q)$ such that $[X] = [E_a]$ and $[\Psi(X)] = [E_b]$, where $a, b \in F_q^n$. Then $a_s = a_k$ if and only if $b_s = b_k$ and $1 \leq s < k \leq n$.
\end{lemma}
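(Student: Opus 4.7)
The plan is to deduce this coordinate–equality statement directly from Lemma \ref{lemma17} by a clever choice of the second rank-one matrix. Concretely, I would apply Lemma \ref{lemma17} with $X_1 = X$ and $X_2 = E_{\mathbf{1}}$.

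First I would record what the hypotheses give us for this substitution. Since $[X] = [E_a]$, we have $a_1 = a = (a_1, a_2, \ldots, a_n)$ in the notation of Lemma \ref{lemma17}, and by hypothesis $[\Psi(X)] = [E_b]$, so $b_1 = b$. For the second matrix, note that $\mathrm{rank}(E_{\mathbf{1}}) = 1$ and $[E_{\mathbf{1}}] = [E_{\mathbf{1}}]$ trivially, so in the notation of Lemma \ref{lemma17} we may take $a_2 = \mathbf{1} = (1,1,\ldots,1)$. The assumption $[\Psi(E_{\mathbf{1}})] = [E_{\mathbf{1}}]$ then forces $b_2 = \mathbf{1}$ as well.

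Next I would evaluate the cross-product condition appearing in Lemma \ref{lemma17}. With $a_2 = \mathbf{1}$, the equality $a_{1s} a_{2k} = a_{1k} a_{2s}$ simplifies to $a_s \cdot 1 = a_k \cdot 1$, i.e.\ $a_s = a_k$. Similarly, with $b_2 = \mathbf{1}$, the equality $b_{1s} b_{2k} = b_{1k} b_{2s}$ simplifies to $b_s = b_k$. Invoking Lemma \ref{lemma17} then yields the desired biconditional $a_s = a_k \iff b_s = b_k$ for all $1 \le s < k \le n$.

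There is essentially no obstacle here: all the work is absorbed into Lemma \ref{lemma17}, and the only thing to verify is that the all-ones vector is legitimate as the second input, which follows from the fixed-point hypothesis $[\Psi(E_{\mathbf{1}})] = [E_{\mathbf{1}}]$. The proof is therefore a short, clean application of the preceding lemma, setting up the next step (which will presumably combine this coordinate-equality preservation with Lemma \ref{lemma16} to show that $\Psi$ acts on the parametrising vectors by a field automorphism composed with a coordinatewise rescaling).
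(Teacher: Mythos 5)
Your proposal is correct and follows essentially the same route as the paper: the paper's proof likewise applies Lemma \ref{lemma17} with the all-ones vector as the second parametrising vector (using $[\Psi(E_{\mathbf 1})]=[E_{\mathbf 1}]$), so the cross-product condition $a_s\cdot 1 = 1\cdot a_k$ reduces to $a_s=a_k$ and the biconditional follows. No substantive difference.
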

\begin{proof}
Let $a_s = a_k$. Then $[\Psi(E_\mathbf{1})] = [E_\mathbf{1}]$. It follows that $a_s\cdot 1 = 1\cdot a_k$ and so $b_s\cdot 1 = 1\cdot b_k$.  By Lemma \ref{lemma17}, we have $b_s = b_k$. If $b_s = b_k$, then $b_s\cdot 1 = 1\cdot b_k$ gives  $a_s\cdot 1 = 1\cdot a_k$. By Lemma \ref{lemma17}, we get $a_s = a_k$.
\end{proof}

Let $\Psi \in Aut(\overrightarrow{\Gamma_{l-i}}(M_n(F_q)))$ such that $[\Psi(E_{e_k})] = [E_{e_k}]$, where $\ 1 \leq k \leq n$ and $[\Psi(E_\mathbf{1})] = [E_\mathbf{1}]$. Let $X \in M_n(F_q)$ such that $[X] = [E_\alpha]$ where $\alpha = (e_1\ +\ ae_2)$ and $a \in F_q$. Further assume that $[\Psi(X)] = [E_\beta]$ where $\beta = {e_1 \ + \ a'e_2}$ and $a' \in F_q$. Note that $a'$ depends upon $a$. Hence, we can define a mapping $\Upsilon$ on $F_q$ such that $\Upsilon(a) = a'$. Thus, $\Upsilon$ is one-one because if $\Upsilon(u) = \Upsilon(v)$, where $u,v \in F_q$, then by Lemma \ref{lemma17}, we get $u = v$. As $\Upsilon$ is a one-one mapping over finite field $F_q$. It follows that $\Upsilon$ is onto. Thus, $\Upsilon$ is bijective over $F_q$. Moreover, by Lemma \ref{lemma16} and Lemma \ref{lemma17} we get, $\Upsilon(0) = 0$ and $\Upsilon(1) = 1$.

\begin{lemma}\label{lemma20}
Let $\Psi \in Aut(\overrightarrow{\Gamma_{l-i}}(M_n(F_q)))$ such that $[\Psi(E_{e_k})] = [E_{e_k}]$, where $1 \leq k \leq n$ and $[\Psi(E_\mathbf{1})] = [E_\mathbf{1}]$. Let $\Upsilon$ be as defined above. Then the following holds:
\begin{enumerate}
\item[{\rm(i)}] If $[X] = [E_{\{e_1+ae_k\}}]$, where $X \in M_n(F_q)$, then $[\Psi(X)] = [E_{\{e_1+\Upsilon(a)e_k\}}]$ for each $2 \leq k \leq n$ and $a \in F_q$.
\item[{\rm(ii)}] If $[X] = [E_{\{e_1 + a_2e_2 + \cdots + a_n e_n\}}]$, where $X \in M_n(F_q)$, then $[\Psi(X)] =$ $[E_{\{e_1+\Upsilon(a_2)e_2+\cdots+\Upsilon(a_n)e_n\}}]$, where $a_i \in F_q$ and $2\leq i \leq n$.
\item[{\rm(iii)}] If $[X] = [E_{\{e_{i}+a_{i+1}e_{i+1}+ \cdots +a_n e_n\}}]$, where $X \in M_n(F_q)$, then $[\Psi(X)] = $ $[E_{\{e_i +\Upsilon(a_{i+1})e_{i+1}+ \cdots +\Upsilon(a_n)e_n\}}]$, when $ a_j \in F_q$ and $i < j \leq n$.
\end{enumerate}
\end{lemma}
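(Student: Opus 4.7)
The plan is to prove the three parts in sequence; each relies on Lemmas \ref{lemma16}, \ref{lemma17}, and \ref{lemma19}, together with the definition of $\Upsilon$. Parts (ii) and (iii) ultimately reduce to (i) (and a leading-position-$i$ analogue of it) by applying Lemma \ref{lemma17} with carefully chosen bridge vectors.

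For (i), Lemmas \ref{lemma16} and \ref{lemma19} applied to $[\Psi(E_{e_1+ae_k})]=[E_b]$ force $b_l=0$ for every $l\notin\{1,k\}$, so after normalization $[\Psi(E_{e_1+ae_k})]=[E_{e_1+f_k(a)e_k}]$ for some function $f_k:F_q\to F_q$ (with $f_2=\Upsilon$ by definition). To show $f_k=\Upsilon$ for $k\geq 3$, I introduce the bridge $e_1+ae_2+ae_k$. Applying Lemma \ref{lemma17} to the pair $(E_{e_1+ae_2},\,E_{e_1+ae_2+ae_k})$ with its index pair chosen as $(1,2)$ shows that the second coordinate of $[\Psi(E_{e_1+ae_2+ae_k})]$ equals $\Upsilon(a)$; applying Lemma \ref{lemma17} again to $(E_{e_1+ae_k},\,E_{e_1+ae_2+ae_k})$ with index pair $(1,k)$ shows that its $k$-th coordinate equals $f_k(a)$. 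Lemma \ref{lemma19} applied to $E_{e_1+ae_2+ae_k}$ (whose $2$nd and $k$-th coefficients both equal $a$) then forces these two image coordinates to coincide, giving $f_k(a)=\Upsilon(a)$. The case $a=0$ is trivial since $[\Psi(E_{e_1})]=[E_{e_1}]$.

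For (ii), Lemma \ref{lemma16} already forces $b_l=0$ whenever $a_l=0$. For each $k$ with $a_k\neq 0$, Lemma \ref{lemma17} applied to $E_{e_1+a_k e_k}$ (whose image $[E_{e_1+\Upsilon(a_k)e_k}]$ is given by part (i)) paired with $X$, with index pair $(1,k)$, directly yields $b_k=\Upsilon(a_k)$. For (iii), I first promote (i) to an arbitrary leading position $i$: namely $[\Psi(E_{e_i+ae_k})]=[E_{e_i+\Upsilon(a)e_k}]$ for every $i<k$. This is obtained by bridging through $E_{e_1+e_i+ae_k}$: Lemma \ref{lemma17} against $E_{e_1+ae_k}$ with index pair $(1,k)$, combined with Lemma \ref{lemma19} giving $c_1=c_i$, pins down the image of the bridge, and then a further application of Lemma \ref{lemma17} against $E_{e_i+ae_k}$ with index pair $(i,k)$ transfers $\Upsilon(a)$ onto the $k$-th coordinate of $[\Psi(E_{e_i+ae_k})]$. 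With this shifted analogue of (i) in hand, part (iii) follows exactly as (ii), using $E_{e_i+a_k e_k}$ as the bridge for each $k>i$ with $a_k\neq 0$.

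The principal obstacle throughout is selecting bridge vectors so that Lemma \ref{lemma17} yields equal-minor conditions that one can actually read off. A naive pairing such as $E_{e_1+ae_2}$ with $E_{e_1+ae_k}$ satisfies no nontrivial $2\times 2$ minor identity, so no useful information is transferred. The key move is to insert a third vector carrying the coefficient $a$ simultaneously in two positions (for instance, $e_1+ae_2+ae_k$ in (i), or $e_1+e_i+ae_k$ in (iii)), which interfaces with both extremes of interest via equal minors and binds the relevant unknown coordinate to $\Upsilon(a)$.
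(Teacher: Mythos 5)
Your proposal is correct and follows essentially the same route as the paper: part (i) uses the very same bridge $e_1+ae_2+ae_k$ together with Lemmas \ref{lemma17} and \ref{lemma19}, and part (ii) is identical. The only cosmetic difference is in (iii), where you first derive a shifted analogue of (i) and then repeat the argument of (ii) coordinate by coordinate, while the paper reaches the same conclusion in one step by applying Lemma \ref{lemma17} to $X$ against the single bridge $e_1+e_i+a_{i+1}e_{i+1}+\cdots+a_ne_n$, whose image is already known from (ii).
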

\begin{proof}

\begin{enumerate}
\item[{\rm(i)}] For $k = 2$, there is nothing to prove. We need to prove for $k \geq 3$. Let $Y \in M_n(F_q)$ such that $[Y] = [E_{\{e_1+ae_2+ae_k\}}]$ and $[\Psi(Y)] = [E_{\{e_1+a'e_2+a''e_k\}}]$. By Lemma \ref{lemma19}, we get $a' = a''$ so $[\Psi(Y)] = [E_{\{e_1+a'e_2+a'e_k\}}]$. Let $[Z] = [E_{\{e_1+ae_2\}}]$. Then $[\Psi(Z)] = [E_{\{e_1+\Upsilon(a)e_2\}}]$. By Lemma \ref{lemma17}, and using $[Y]$ and $[Z]$, we get $a' = \Upsilon(a)$. Thus, $ [\Psi(Y)] = [E_{\{e_1+\Upsilon(a)e_2+\Upsilon(a)e_k\}}]$. Let $[\Psi(X)] = [E_{\{e_1+de_k\}}]$. By Lemma \ref{lemma17} and using $[X]$ and $[Y]$, we get $d = \Upsilon(a)$.
\item[{\rm(ii)}] Suppose that $[\Psi(X)] = [E_{\{e_1+c_2e_2+\cdots+c_ne_n\}}]$. Let $Y_k \in M_n(F_q)$ such that $[Y_k] = [E_{\{e_1+a_ke_k\}}]$. It follows that $[\Psi(Y_k)] = [E_{\{e_1+\Upsilon(a_k)e_k\}}]$. By Lemma \ref{lemma17} and by using $[Y_k]$ and $[X]$, we get $c_k = \Upsilon(a_k)$ for each $2 \leq k \leq n$.

\item[{\rm(iii)}] Let $Y \in M_n(F_q)$ such that $[Y] =  [E_{\{e_1+a_2e_2+ \cdots +a_n e_n\}}]$. Then $[\Psi(Y)] = [E_{\{e_1+\Upsilon(a_2)e_2+\cdots+\Upsilon(a_n)e_n\}}]$. Suppose that $Z \in M_n(F_q)$ such that $[Z] = [E_{\{e_1+e_{i}+a_{i+1}e_{i+1}+\cdots +a_n e_n\}}]$. It follows that $[\Psi(Z)] = [E_{\{e_1+e_i +\Upsilon(a_{i+1})e_{i+1}+ \cdots+\Upsilon(a_n)e_n\}}]$. Let $[\Psi(X)] = [E_{\{e_i+d_{i+1}e_{i+1}+ \cdots +d_ne_n\}}]$. By Lemma \ref{lemma17} and by using $[X]$ and $[Z]$, we have $d_j = \Upsilon(a_j)$, where $i < j \leq n$.
\end{enumerate}
\end{proof}
\begin{lemma}\label{lemma21}
Let $\Psi \in Aut(\overrightarrow{\Gamma_{l-i}}(M_n(F_q)))$ such that $[\Psi(E_{e_k})] = [E_{e_k}]$ where $1 \leq k \leq n$ and $[\Psi(E_\mathbf{1})] = [E_\mathbf{1}]$. Let $\Upsilon$ be a map as defined above. Then $\Upsilon$ is field automorphism of $F_q$.
\end{lemma}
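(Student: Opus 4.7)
The plan is to verify that $\Upsilon$ is additive and multiplicative; together with the bijectivity of $\Upsilon$ and the facts $\Upsilon(0)=0$ and $\Upsilon(1)=1$ already noted, this will make $\Upsilon$ a field automorphism of $F_q$.

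For multiplicativity I would invoke Lemma \ref{lemma17} on a single carefully chosen pair of rank-$1$ matrices. Take $X_1, X_2 \in M_n(F_q)$ with $[X_1] = [E_{e_1 + ae_2 + abe_3}]$ and $[X_2] = [E_{e_2 + be_3}]$. By Lemma \ref{lemma20}, $[\Psi(X_1)] = [E_{e_1 + \Upsilon(a)e_2 + \Upsilon(ab)e_3}]$ and $[\Psi(X_2)] = [E_{e_2 + \Upsilon(b)e_3}]$. In the notation of Lemma \ref{lemma17} with $s = 2$ and $k = 3$, the entries of $a_1 = (1,a,ab,0,\ldots)$ and $a_2 = (0,1,b,0,\ldots)$ satisfy $a_{1s}a_{2k} = ab = a_{1k}a_{2s}$, and the lemma then forces $b_{1s}b_{2k} = b_{1k}b_{2s}$, i.e.\ $\Upsilon(a)\Upsilon(b) = \Upsilon(ab)$.

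For additivity I would use a rank-$2$ subspace argument. Fix $v_1 = e_1 + ae_3$ and $v_2 = e_2 + be_3$ and form the rank-$2$ left ideal $J = [E_{\{v_1, v_2\}}]$. Since $[E_{v_1}] \subsetneq J$ and $[E_{v_2}] \subsetneq J$, applying $\Psi$ gives $[E_{e_1 + \Upsilon(a)e_3}] \subsetneq [\Psi(E_{\{v_1,v_2\}})]$ and $[E_{e_2 + \Upsilon(b)e_3}] \subsetneq [\Psi(E_{\{v_1,v_2\}})]$ via Lemma \ref{lemma20}. Corollary \ref{rank} says that $[\Psi(E_{\{v_1,v_2\}})]$ is a rank-$2$ left ideal, hence an $F_q$-space of dimension $2n$; as the sum of the two rank-$1$ image ideals also has dimension $2n$ (the row-space being the span of two linearly independent vectors), these two ideals coincide, so the row-space of $[\Psi(E_{\{v_1,v_2\}})]$ is the span of $e_1 + \Upsilon(a)e_3$ and $e_2 + \Upsilon(b)e_3$. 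The vector $v_1 + v_2 = e_1 + e_2 + (a+b)e_3$ lies in $\mathrm{span}(v_1,v_2)$, so $[E_{v_1+v_2}] \subset J$ and by Lemma \ref{lemma20}(ii) its image is $[E_{e_1 + e_2 + \Upsilon(a+b)e_3}]$. Containment of this ideal in $[\Psi(E_{\{v_1,v_2\}})]$ then gives $e_1 + e_2 + \Upsilon(a+b)e_3 \in \mathrm{span}(e_1 + \Upsilon(a)e_3,\ e_2 + \Upsilon(b)e_3)$; matching coefficients of $e_1, e_2, e_3$ forces $\Upsilon(a+b) = \Upsilon(a) + \Upsilon(b)$.

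The main obstacle is the dimension-count identification in the additivity step: one direction of the required equality $[\Psi(E_{\{v_1,v_2\}})] = [E_{e_1+\Upsilon(a)e_3}] + [E_{e_2+\Upsilon(b)e_3}]$ is immediate from the automorphism property, but the reverse containment rests on matching $F_q$-dimensions, which in turn needs Corollary \ref{rank} together with linear independence of $v_1, v_2$. Once this identification is in place, the remaining work is a routine coefficient match, and the multiplicativity part is a direct application of Lemma \ref{lemma17}.
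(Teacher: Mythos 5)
Your proof is correct, and its overall architecture matches the paper's: both establish multiplicativity via Lemma \ref{lemma17} applied to suitable rank-one ideals, and both establish additivity by the identical rank-two dimension-count argument with $v_1=e_1+ae_3$, $v_2=e_2+be_3$ and the vector $v_1+v_2$. The one genuine difference is in the multiplicative step. The paper first proves $\Upsilon(a^{-1})=\Upsilon(a)^{-1}$ using the pair $[E_{\{e_1+ae_2+e_3\}}]$, $[E_{\{e_2+a^{-1}e_3\}}]$, then applies Lemma \ref{lemma17} again to $[E_{\{e_1+abe_2+e_3\}}]$ and $[E_{\{e_1+be_2+a^{-1}e_3\}}]$ to get $\Upsilon(ab)\Upsilon(a^{-1})=\Upsilon(b)$; this forces a preliminary case split on $ab=0$ (so that $a^{-1}$ exists). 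Your choice of test vectors $(1,a,ab,0,\dots)$ and $(0,1,b,0,\dots)$ reaches $\Upsilon(a)\Upsilon(b)=\Upsilon(ab)$ in a single application of Lemma \ref{lemma17}, needs no inverses, and handles $ab=0$ uniformly, which is a modest but real simplification. Everything you invoke (Lemma \ref{lemma20} for the canonical form of the images, Corollary \ref{rank} and the identity $\dim(I)=n\cdot\mathrm{rank}(I)$ for the dimension count) is available before this lemma, so there is no circularity.
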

\begin{proof}
Since $\Upsilon$ is bijective in $F_q$ so need to prove that 
\begin{center}
$\Upsilon(a+b) = \Upsilon(a)+\Upsilon(b)$ and,
    $\Upsilon(a\cdot b) = \Upsilon(a)\cdot \Upsilon(b)$ where $a,b \in F_q$.
\end{center}
If $a\cdot b = 0$ then either $a = 0$ or $b = 0$. It follows that either $\Upsilon(a) = 0$ or $\Upsilon(b) = 0$. Therefore, $\Upsilon(a)\cdot \Upsilon(b) = 0 = \Upsilon(a\cdot b)$. Hence, we assume that $a\cdot b \neq 0$ which implies that $\Upsilon(a)\cdot \Upsilon(b) \neq 0$. First we claim that $\Upsilon(a^{-1}) = \Upsilon(a)^{-1}$. Let $X_1, X_2 \in M_n(F_q)$ such that $[X_1] = [E_{\{e_1+ae_2+e_3\}}]$ and $[X_2] = [E_{\{e_2+a^{-1}e_3\}}]$. Then $[\Psi(X_1)] = [E_{\{e_1+\Upsilon(a)e_2+e_3\}}]$ and $[\Psi(X_2)] = [E_{\{e_2+\Upsilon(a^{-1})e_3\}}]$. Therefore, by Lemma \ref{lemma17} and by using $[X_1]$ and $[X_2]$, we have $\Upsilon(a)\cdot \Upsilon(a^{-1}) = 1$. It follows that $\Upsilon(a^{-1}) = \Upsilon(a)^{-1}$. Now suppose that $X_3, X_4 \in M_n(F_q)$ such that $[X_3] = [E_{\{e_1+abe_2+e_3\}}]$ and $[X_4] = [E_{\{e_1+be_2+a^{-1}e_3\}}]$. It follows that $[\Psi(X_3)] = [E_{\{e_1+\Upsilon(ab)e_2+e_3\}}] $ and $[\Psi(X_4)] = [E_{\{e_1+\Upsilon(b)e_2+\Upsilon(a^{-1})e_3\}}]$. In view of Lemma \ref{lemma17} and by using $[X_3]$ and $[X_4]$, we get $\Upsilon(ab)\cdot \Upsilon(a^{-1}) = \Upsilon(b)$. Consequently, $\Upsilon(ab) = \Upsilon(a)\cdot \Upsilon(b)$.

Let $Y_1, Y_2 \in M_n(F_q)$ such that $[Y_1] = [E_{\{e_1+ae_3\}}]$ and $[Y_2] = [E_{\{e_2+be_3\}}]$. Then $[\Psi(Y_1)] = [E_{\{e_1+\Upsilon(a)e_3\}}]$ and $[\Psi(Y_2)] = [E_{\{e_2+\Upsilon(b)e_3\}}]$. Now, $[Y_1] \subset  [E_{\{e_1+ae_3, e_2+be_3\}}]$ and $[Y_2] \subset  [E_{\{e_1+ae_3, e_2+be_3\}}]$ implies that $[E_{\{e_1+\Upsilon(a)e_3\}}] \subset [\Psi(E_{\{e_1+ae_3, e_2+be_3\}})]$ and $[E_{\{e_2+\Upsilon(b)e_3\}}] \subset [\Psi(E_{\{e_1+ae_3, e_2+be_3\}})]$. It follows that $[E_{\{e_1+\Upsilon(a)e_3\}}] + [E_{\{e_2+\Upsilon(b)e_3\}}] \subseteq [\Psi(E_{\{e_1+ae_3, e_2+be_3\}})]$. Since $\dim([E_{\{e_1+\Upsilon(a)e_3\}}] + [E_{\{e_2+\Upsilon(b)e_3\}}]) = \dim([\Psi(E_{\{e_1+ae_3, e_2+be_3\}})]) = 2n$ so $[E_{\{e_1+\Upsilon(a)e_3\}}] + [E_{\{e_2+\Upsilon(b)e_3\}}] = [\Psi(E_{\{e_1+ae_3, e_2+be_3\}})]$. Assume that $Y_3\in M_n(F_q)$ such that $[Y_3] = [E_{\{e_1+e_2+(a+b)e_3\}}]$. Then $[Y_3] \subset [E_{\{e_1+ae_3, e_2+be_3\}}]$ implies that $[\Psi(Y_3)] \subset [\Psi(E_{\{e_1+ae_3, e_2+be_3\}})]$. It follows that $[E_{\{e_1+e_2+\Upsilon(a+b)e_3\}}] \subset [\Psi(E_{\{e_1+ae_3, e_2+be_3\}})]$ so that $[E_{\{e_1+e_2+\Upsilon(a+b)e_3\}}] \subset [E_{\{e_1+\Upsilon(a)e_3\}}] + [E_{\{e_2+\Upsilon(b)e_3\}}]$. Consequently, $\Upsilon(a+b) = \Upsilon(a)+\Upsilon(b)$. Thus, $\Upsilon$ is field automorphism of $F_q$.
\end{proof}

Now, we extend $\Upsilon$ to $M_n(F_q)$ such that $\Upsilon(X) = [\Upsilon(x_{ij})]_{n\times n}$ for any $X  = [x_{ij}]_{n\times n} \in M_n(F_q)$. Then define a mapping on vertex set of $\overrightarrow{\Gamma_{l-i}}(M_n(F_q))$ and denote it by $\Upsilon$ only. From now onwards, whether we are referring to mapping on $F_q$ or mapping on $M_n(F_q)$ by $\Upsilon$, will be determined by the context. 
\begin{lemma}
The map $\Upsilon$ (as defined above) is an automorphism of $\overrightarrow{\Gamma_{l-i}}(M_n(F_q))$.
\end{lemma}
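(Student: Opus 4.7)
The plan is to show that the entry-wise extension $\Upsilon$ on $M_n(F_q)$ is actually a ring automorphism, and then deduce from this that it permutes the principal left ideals in a way that preserves proper inclusion, which is exactly the edge relation of $\overrightarrow{\Gamma_{l-i}}(M_n(F_q))$.

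First I would verify that $\Upsilon : M_n(F_q) \to M_n(F_q)$ is a bijection. This is immediate from Lemma \ref{lemma21}: since $\Upsilon$ is bijective on $F_q$, applying it entry-wise to a matrix gives an entry-wise bijection on $M_n(F_q)$ (its inverse is the entry-wise extension of $\Upsilon^{-1}$).

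Next I would show $\Upsilon$ is a ring homomorphism on $M_n(F_q)$. For addition this is transparent because matrix addition is entry-wise and $\Upsilon$ is additive on $F_q$. For multiplication, writing the $(i,j)$-entry of $AB$ as $\sum_{k=1}^{n} a_{ik} b_{kj}$ and using both the additivity and multiplicativity of $\Upsilon$ on $F_q$ (Lemma \ref{lemma21}) gives
\begin{equation*}
\Upsilon\!\left(\sum_{k=1}^{n} a_{ik} b_{kj}\right) = \sum_{k=1}^{n} \Upsilon(a_{ik})\Upsilon(b_{kj}),
\end{equation*}
which is the $(i,j)$-entry of $\Upsilon(A)\Upsilon(B)$. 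Thus $\Upsilon(AB) = \Upsilon(A)\Upsilon(B)$, so $\Upsilon$ is a ring automorphism of $M_n(F_q)$.

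With this in hand, I would observe that $\Upsilon$ carries left ideals to left ideals: for any $X \in M_n(F_q)$,
\begin{equation*}
\Upsilon([X]) = \{\Upsilon(AX) : A \in M_n(F_q)\} = \{\Upsilon(A)\Upsilon(X) : A \in M_n(F_q)\} = [\Upsilon(X)],
\end{equation*}
where the last equality uses that $A \mapsto \Upsilon(A)$ runs over all of $M_n(F_q)$. Since $\Upsilon$ is a bijection, proper inclusions are preserved in both directions: $[X] \subset [Y]$ if and only if $\Upsilon([X]) \subset \Upsilon([Y])$, that is, $[\Upsilon(X)] \subset [\Upsilon(Y)]$. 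Therefore $X \rightarrow Y$ if and only if $\Upsilon(X) \rightarrow \Upsilon(Y)$, proving $\Upsilon \in \mathrm{Aut}(\overrightarrow{\Gamma_{l-i}}(M_n(F_q)))$.

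The whole argument is essentially structural once one knows from Lemma \ref{lemma21} that $\Upsilon|_{F_q}$ is a field automorphism, so no step is really hard; the only point that needs a line of care is checking multiplicativity on matrices from multiplicativity on scalars, since one might naively worry about interactions between the entry-wise extension and matrix multiplication.
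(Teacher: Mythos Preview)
Your proof is correct and follows essentially the same approach as the paper: both establish that the entry-wise extension of $\Upsilon$ is bijective and multiplicative on $M_n(F_q)$, and then use this to show that edges are preserved. Your packaging is slightly cleaner---you record the identity $\Upsilon([X])=[\Upsilon(X)]$ directly and read off preservation of proper inclusion from bijectivity, whereas the paper argues by element-chasing (picking $A\in[\Upsilon(X)]$, pulling back through $\Upsilon^{-1}$, and pushing forward again)---but the substance is the same.
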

\begin{proof}
Let $X,Y \in M_n(F_q)$ such that $\Upsilon(X) = \Upsilon(Y)$. Then $[\Upsilon(x_{ij})]_{n\times n} = [\Upsilon(y_{ij})]_{n\times n}$, where $X = [x_{ij}]_{n\times n}$ and $Y = [y_{ij}]_{n\times n}$. Since $\Upsilon$ is bijective over $F_q$, we have $[x_{ij}]_{n\times n} = [y_{ij}]_{n\times n}$. Therefore, $X = Y$. Thus, $\Upsilon$ is one-one over $M_n(F_q)$ and because $M_n(F_q)$ is finite so $\Upsilon$ is bijective over $M_n(F_q)$.

Let $X \rightarrow Y$. Then $[X] \subset [Y]$. Further assume that $A \in [\Upsilon(X)]$. It follows that $A = C\Upsilon(X)$, where $C \in M_n(F_q)$. Let $U, V$ be arbitrary element of $M_n(F_q)$. Then $\Upsilon(U\cdot V) = [w_{ij}]_{n\times n} = [\Upsilon(\sum_{k=1}^nu_{ik}\cdot v_{kj})]_{n\times n}$,  where $U \cdot V = W \in M_n(F_q)$. Since $\Upsilon$ is field automorphism of $F_q$ we get $\Upsilon(U\cdot V) = [\sum_{k=1}^n\Upsilon(u_{ik})\cdot \Upsilon(v_{kj})]$ and  $[\sum_{k=1}^n\Upsilon(u_{ik})\cdot \Upsilon(v_{kj})] = \Upsilon(U)\cdot \Upsilon(V)$. Therefore, $\Upsilon(U\cdot V) = \Upsilon(U)\cdot \Upsilon(V)$.

Thus, $A = C\Upsilon(X) = \Upsilon(\Upsilon^{-1}(C)\cdot X)$ so that $\Upsilon^{-1}(A) = \Upsilon^{-1}(C)\cdot X$. It follows that $\Upsilon^{-1}(A) \in [X] \subset [Y]$. Therefore, there exists $D \in M_n(F_q)$ such that $\Upsilon^{-1}(A) = DY$ implies that $A = \Upsilon(D)\cdot \Upsilon(Y)$. Consequently, $A \in [\Upsilon(Y)]$ and so $[\Upsilon(X)] \subseteq [\Upsilon(Y)]$. Now, there exists $Z \in [Y]$ such that $Z \notin [X]$. Let $B \in M_n(F_q)$ such that $Z = BY$. Then $\Upsilon(Z) = \Upsilon(B)\cdot \Upsilon(Y)$. It implies that $\Upsilon(Z) \in [\Upsilon(Y)]$. If  $\Upsilon(Z) \in [\Upsilon(X)]$, then there exists $L \in M_n(F_q)$ such that $\Upsilon(Z) = L \cdot \Upsilon(X)$ and this gives $Z = \Upsilon^{-1}(L)\cdot X$ as $\Upsilon$ is bijective over $M_n(F_q)$. Therefore, $Z \in [X]$, a contradiction. Thus, $\Upsilon(Z) \in [\Upsilon(Y)]$ but $\Upsilon(Z) \notin [\Upsilon(X)]$. It follows that $[\Upsilon(X)] \subset [\Upsilon(Y)]$ and so $\Upsilon(X) \rightarrow \Upsilon(Y)$. Thus, $\Upsilon$ is automorphism of $\overrightarrow{\Gamma_{l-i}}(M_n(F_q))$.
\end{proof}
\begin{lemma}\label{lemma23}
If $[X] = [Y]$ then $[\Upsilon(X)] = [\Upsilon(Y)]$, where $X,Y \in M_n(F_q)$.
\end{lemma}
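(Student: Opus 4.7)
The plan is to reduce the claim directly to the multiplicative property of the ring homomorphism $\Upsilon$ already established in the proof of the preceding lemma, namely that $\Upsilon(UV) = \Upsilon(U)\cdot\Upsilon(V)$ for all $U,V \in M_n(F_q)$.

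First I would unpack the hypothesis $[X] = [Y]$ as mutual containment: since $X \in [X] = [Y]$ and $Y \in [Y] = [X]$, there exist $A, B \in M_n(F_q)$ with $X = AY$ and $Y = BX$. Next, I would apply $\Upsilon$ to both of these identities and invoke multiplicativity to obtain $\Upsilon(X) = \Upsilon(A)\cdot\Upsilon(Y)$ and $\Upsilon(Y) = \Upsilon(B)\cdot\Upsilon(X)$. These two identities say precisely that $\Upsilon(X) \in [\Upsilon(Y)]$ and $\Upsilon(Y) \in [\Upsilon(X)]$, and so $[\Upsilon(X)] \subseteq [\Upsilon(Y)] \subseteq [\Upsilon(X)]$, giving equality.

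There is essentially no obstacle here: all the hard work was done in the previous lemma, where the field-automorphism property of $\Upsilon$ on $F_q$ was lifted entry-wise to $M_n(F_q)$ and shown to respect matrix multiplication. The present lemma is just the observation that any map preserving matrix multiplication automatically preserves left-ideal generation, hence also equality of left ideals. In fact, the same two-line argument would establish the stronger statement that $[X] \subseteq [Y]$ implies $[\Upsilon(X)] \subseteq [\Upsilon(Y)]$, which is really what is needed in the sequel, although the authors only record the equality version here.
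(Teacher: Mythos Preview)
Your proof is correct and follows essentially the same approach as the paper: both arguments reduce the claim to the multiplicativity $\Upsilon(UV)=\Upsilon(U)\Upsilon(V)$ established in the preceding lemma. The only cosmetic difference is that you work directly with the generators (writing $X=AY$, $Y=BX$ and applying $\Upsilon$), whereas the paper chases an arbitrary element $A\in[\Upsilon(X)]$ through $\Upsilon^{-1}$ and back; your version is slightly more streamlined but the content is identical.
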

\begin{proof}
Let $A \in [\Upsilon(X)]$. Then there exists $W \in M_n(F_q)$ such that $A = W \cdot \Upsilon(X)$. It follows that $\Upsilon^{-1}(A) = \Upsilon^{-1}(W)\cdot X$ and so $\Upsilon^{-1}(A) \in [X] = [Y]$. Then there exists $C \in M_n(F_q)$ such that $\Upsilon^{-1}(A) = C \cdot Y$. Therefore, $A = \Upsilon(C) \cdot \Upsilon(Y)$ implies $A \in [\Upsilon(Y)]$. Thus, $[\Upsilon(X)] \subseteq [\Upsilon(Y)]$. Similarly, we can prove that $[\Upsilon(Y)] \subseteq [\Upsilon(X)]$. Hence, $[\Upsilon(X)] = [\Upsilon(Y)]$.
\end{proof}

\begin{lemma}\label{lemma24}
Let $\Psi \in Aut(\overrightarrow{\Gamma_{l-i}}(M_n(F_q)))$ such that $[\Psi(E_{e_k})] = [E_{e_k}]$, where $1 \leq k \leq n$ and $[\Psi(E_\mathbf{1})] = [E_\mathbf{1}]$ and let $\Upsilon$ be a map as defined above. Then for some $\Upsilon$, we have $[(\Upsilon^{-1}\cdot \Psi)(X)] = [X]$ and $X \in M_n(F_q)$.
\end{lemma}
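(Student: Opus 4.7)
The proof will reduce to the rank-$1$ case via Lemma \ref{lemma20}, and then extend to higher rank by a sum decomposition of left ideals. The key technical tool I will use throughout is the commutation identity $\Upsilon^{-1}([M]) = [\Upsilon^{-1}(M)]$ for every $M \in M_n(F_q)$. This holds because the componentwise extension of $\Upsilon$ is a ring automorphism of $M_n(F_q)$ (this was verified in the proof that $\Upsilon$ is a graph automorphism): if $A = CM \in [M]$ then $\Upsilon^{-1}(A) = \Upsilon^{-1}(C)\Upsilon^{-1}(M) \in [\Upsilon^{-1}(M)]$, and the reverse inclusion is symmetric. This lets me transport statements about $[\Psi(X)]$ into statements about $[\Upsilon^{-1}\Psi(X)]$.

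For the rank-$1$ case, I will write $[X] = [E_a]$ with $a = e_i + a_{i+1}e_{i+1} + \cdots + a_n e_n$ in the normalized form (first nonzero entry equal to $1$ at some position $i$, which is possible because $[E_{ca}] = [E_a]$ for every nonzero scalar $c$). Lemma \ref{lemma20}(iii) applied to $\Psi$ gives $[\Psi(X)] = [E_b]$ with $b = e_i + \Upsilon(a_{i+1})e_{i+1} + \cdots + \Upsilon(a_n)e_n$. Since $\Upsilon^{-1}$ acts on matrices componentwise and fixes $0$ and $1$, it sends $E_b$ to $E_a$; combined with the commutation identity, $[\Upsilon^{-1}\Psi(X)] = \Upsilon^{-1}([E_b]) = [\Upsilon^{-1}(E_b)] = [E_a] = [X]$. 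The boundary cases $\operatorname{rank}(X) = 0$ (forcing $X = 0$) and $\operatorname{rank}(X) = n$ (forcing $[X] = M_n(F_q)$) follow directly from Corollary \ref{rank}, because $\Upsilon^{-1}\circ\Psi$ preserves rank.

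For $\operatorname{rank}(X) = r \geq 2$, I will invoke Lemma \ref{independentrows} to write $[X] = \sum_{j=1}^{r}[E_{a^{(j)}}]$ for some linearly independent $a^{(j)} \in F_q^n$. For each $j$, the proper containment $[E_{a^{(j)}}] \subsetneq [X]$ gives an edge $E_{a^{(j)}} \to X$, and since $\Upsilon^{-1}\circ\Psi$ is a graph automorphism, it produces the edge $\Upsilon^{-1}\Psi(E_{a^{(j)}}) \to \Upsilon^{-1}\Psi(X)$, hence $[\Upsilon^{-1}\Psi(E_{a^{(j)}})] \subsetneq [\Upsilon^{-1}\Psi(X)]$. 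By the rank-$1$ case, the left side equals $[E_{a^{(j)}}]$, so summing yields $[X] \subseteq [\Upsilon^{-1}\Psi(X)]$. Corollary \ref{rank} then forces $\dim[\Upsilon^{-1}\Psi(X)] = nr = \dim[X]$, so the containment is an equality. The only real obstacle is pinning down the commutation identity at the outset; after that, Lemma \ref{lemma20} handles the rank-$1$ case and the containment-plus-dimension argument propagates the conclusion to all higher ranks with no additional input.
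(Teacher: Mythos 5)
Your proposal is correct and matches the paper's proof in all essentials: both handle rank $0$ and rank $1$ via Corollary \ref{rank} and Lemma \ref{lemma20} together with the fact that $\Upsilon^{-1}$ commutes with forming left ideals (the paper packages this as Lemma \ref{lemma23}), and both settle rank $r\ge 2$ by decomposing $[X]$ into a sum of rank-one left ideals (the paper via $[E_rP]$ and its rows $[E_{e_i}P]$, you via Lemma \ref{independentrows} --- the same decomposition) followed by the containment-plus-dimension argument. No substantive difference.
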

\begin{proof}
In view of Corollary \ref{rank}, if $X = 0$ then $\Upsilon^{-1}\cdot \Psi(X) = X$. It follows that $[(\Upsilon^{-1}\cdot \Psi)(X)] = [X]$. Now, assume that $rank(X) = 1$. Then $[X] = [E_a]$ for some $0 \neq a \in F_q^n$, where $a = (e_i+a_{i+1}e_{i+1}+ \cdots +a_ne_n)$ and $1 \leq i \leq n$. By Lemma \ref{lemma20}, we obtain $[\Psi(X)] = [E_{\{e_i+\Upsilon(a_{i+1})e_{i+1}+ \cdots +\Upsilon(a_n)e_n\}}]$. By Lemma \ref{lemma23}, we have $[(\Upsilon^{-1}\cdot \Psi)(X)] = [\Upsilon^{-1}(E_{\{e_i+\Upsilon(a_{i+1})e_{i+1}+ \cdots +\Upsilon(a_n)e_n\}})]$. It follows that $[(\Upsilon^{-1}\cdot \Psi)(X)] = [E_{\{e_i+a_{i+1}e_{i+1}+ \cdots +a_ne_n\}}]$. Therefore, $[(\Upsilon^{-1}\cdot \Psi)(X)] = [X]$.

Let $rank(X) \geq 2$. Then by Lemma \ref{lemma4}, there exists $P \in M_n^*(F_q)$ such that $[X] = [E_rP]$, where $rank(X) = r$. Since $[E_{e_i}P] \subset [E_rP]$, where $1 \leq i \leq r$, we get $[(\Upsilon^{-1}\cdot \Psi)(E_{e_i}P)] \subset [(\Upsilon^{-1}\cdot \Psi)(X)]$. As $rank(E_{e_i}P) = 1$ implies that $[E_{e_i}P] \subset [(\Upsilon^{-1} \cdot \Psi)(X)]$. It follows that $\sum_{i =1}^r[E_{e_i}P] \subseteq [(\Upsilon^{-1}\cdot \Psi)(X)]$ and so $[E_rP] \subseteq [(\Upsilon^{-1}\cdot \Psi)(X)]$. Since, $\dim([E_rP]) = \dim([(\Upsilon^{-1}\cdot \Psi)(X)]) = nr$, we have  $[E_rP] = [(\Upsilon^{-1}\cdot \Psi)(X)]$. Therefore, $[X] = [(\Upsilon^{-1}\cdot \Psi)(X)]$ for $X \in M_n(F_q)$.
\end{proof}

Define a binary relation $\equiv$ on $M_n(F_q)$ such that $X \equiv Y$ if and only if $[X] = [Y]$. It is easy to check that $\equiv$ is an equivalence relation. Further, we define a mapping $\sigma$ over $M_n(F_q)$ such that it permute elements of each equivalence class arbitrarily.

\begin{lemma}\label{lemma25}
The map $\sigma$ is an automorphism of $\overrightarrow{\Gamma_{l-i}}(M_n(F_q))$.
\end{lemma}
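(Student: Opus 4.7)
The plan is to establish the two defining conditions of a graph automorphism, namely bijectivity on the vertex set and preservation of the directed edge relation, exploiting one key structural observation: since $\sigma$ permutes each $\equiv$-class onto itself, we have $\sigma(Z) \equiv Z$, i.e.\ $[\sigma(Z)] = [Z]$, for every vertex $Z \in M_n(F_q)$.

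First I will verify that $\sigma$ is a permutation of $V(\overrightarrow{\Gamma_{l-i}}(M_n(F_q))) = M_n(F_q)$. The relation $\equiv$ is an equivalence relation and hence partitions $M_n(F_q)$ into disjoint equivalence classes. By construction, $\sigma$ restricts to a permutation of each class, and each element lies in exactly one class, so $\sigma$ is a bijection of $M_n(F_q)$ onto itself. In particular, $\sigma(X) = \sigma(Y)$ forces $X = Y$, which will be needed to preserve the condition that directed edges join \emph{distinct} vertices.

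Next I will verify edge preservation. Suppose $X \to Y$ in $\overrightarrow{\Gamma_{l-i}}(M_n(F_q))$; by definition this means $X \neq Y$ and $[X] \subsetneq [Y]$. Applying the key observation twice, $[\sigma(X)] = [X]$ and $[\sigma(Y)] = [Y]$, so $[\sigma(X)] \subsetneq [\sigma(Y)]$. Combined with $\sigma(X) \neq \sigma(Y)$ (from injectivity), this yields $\sigma(X) \to \sigma(Y)$. Conversely, if $\sigma(X) \to \sigma(Y)$, the same identities give $[X] \subsetneq [Y]$ and bijectivity gives $X \neq Y$, so $X \to Y$. Hence $\sigma$ preserves the arc relation in both directions, and $\sigma \in \mathrm{Aut}(\overrightarrow{\Gamma_{l-i}}(M_n(F_q)))$.

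There is really no serious obstacle here; the whole argument is a clean consequence of the fact that $\equiv$-equivalent elements generate the same left ideal and therefore are indistinguishable from the point of view of the relation $[{-}] \subsetneq [{-}]$. The only mild subtlety is to remember that directed edges in $\overrightarrow{\Gamma_{l-i}}(M_n(F_q))$ are only allowed between distinct vertices, which is precisely what bijectivity of $\sigma$ takes care of.
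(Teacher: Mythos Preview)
Your proof is correct and follows essentially the same approach as the paper: use that $\sigma$ is bijective by construction and that $[\sigma(Z)]=[Z]$ for every $Z$, so proper containment of left ideals is preserved. If anything, you are slightly more careful than the paper, since you also verify the converse implication and explicitly handle the distinctness of $\sigma(X)$ and $\sigma(Y)$.
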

\begin{proof}
By the definition, $\sigma$ is bijective over $M_n(F_q)$. Let $X, Y \in M_n(F_q)$ such that $X \rightarrow Y$. Then $[X] \subset [Y]$. Since $[\sigma(X)] = [X]$ and $[\sigma(Y)] = [Y]$ implies that $[\sigma(X)] \subset [\sigma(Y)]$. It follows that $\sigma(X) \rightarrow \sigma(Y)$. Thus, $\sigma$ is an automorphism of $\overrightarrow{\Gamma_{l-i}}(M_n(F_q))$.
\end{proof}
Now we prove our main result of this section.
\begin{theorem}\label{theorem26}
Let $n \geq 3$ and $\Psi \in Aut(\overrightarrow{\Gamma_{l-i}}(M_n(F_q)))$. Then there exists $P \in M_n^*(F_q)$, a field automorphism $\Upsilon$ and $\sigma$, as defined above, such that $\Psi = \varphi_{P} \cdot \Upsilon \cdot \sigma$.
\end{theorem}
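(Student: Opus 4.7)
The plan is to reduce an arbitrary $\Psi \in Aut(\overrightarrow{\Gamma_{l-i}}(M_n(F_q)))$ by composing on the left with automorphisms of the two ``structured'' types $\varphi_P$ and $\Upsilon$, until the residual automorphism is forced to be of the third type $\sigma$. First, I would invoke Lemma \ref{lemma18} to choose $P_0 \in M_n^*(F_q)$ such that the composite $\Psi_1 := \varphi_{P_0} \cdot \Psi$ satisfies $[\Psi_1(E_{e_k})] = [E_{e_k}]$ for every $1 \leq k \leq n$ together with $[\Psi_1(E_\mathbf{1})] = [E_\mathbf{1}]$. This simultaneously normalizes the coordinate frame and the all-ones direction that will be required to produce the field automorphism.

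Next, because $\Psi_1$ now satisfies the standing hypotheses of Lemmas \ref{lemma19}--\ref{lemma21}, the induced map $\Upsilon\colon F_q \to F_q$ that tracks how $\Psi_1$ acts on rank-one left ideals of the form $[E_{e_1 + ae_2}]$ is a bona fide field automorphism. Extending $\Upsilon$ entrywise to $M_n(F_q)$ as in the paragraph preceding Lemma \ref{lemma23}, Lemma \ref{lemma24} applied to $\Psi_1$ then yields $[(\Upsilon^{-1} \cdot \Psi_1)(X)] = [X]$ for every $X \in M_n(F_q)$; in particular $\Psi_2 := \Upsilon^{-1} \cdot \varphi_{P_0} \cdot \Psi$ preserves each $\equiv$-class, where $X \equiv Y \iff [X] = [Y]$.

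Since $\Psi_2$ is a composition of graph automorphisms it is itself an automorphism of $\overrightarrow{\Gamma_{l-i}}(M_n(F_q))$, and because it preserves every $\equiv$-class it is precisely a map of the type described immediately before Lemma \ref{lemma25}; denote it by $\sigma$. Solving $\sigma = \Upsilon^{-1} \cdot \varphi_{P_0} \cdot \Psi$ and using the identity $\varphi_{Q_1} \cdot \varphi_{Q_2} = \varphi_{Q_2 Q_1}$ from the proof of Corollary \ref{corollary14}, we obtain $\Psi = \varphi_{P_0}^{-1} \cdot \Upsilon \cdot \sigma = \varphi_{P_0^{-1}} \cdot \Upsilon \cdot \sigma$. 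Setting $P := P_0^{-1} \in M_n^*(F_q)$ produces the required decomposition $\Psi = \varphi_P \cdot \Upsilon \cdot \sigma$.

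Since the substantive arguments have all been pushed into Lemmas \ref{lemma11}--\ref{lemma24}, the main obstacle at this stage is purely organizational: tracking the composition order and the inversion correctly. In particular $\Upsilon$ and $\varphi_P$ do not commute in general, so the ordering $\varphi_P \cdot \Upsilon \cdot \sigma$ in the statement is not accidental, and one must be careful when solving for $\Psi$ to keep these three factors in the prescribed positions. The only additional check is that the residual map $\Psi_2$ is indeed a graph automorphism of $\overrightarrow{\Gamma_{l-i}}(M_n(F_q))$, which is immediate because every factor in $\Upsilon^{-1} \cdot \varphi_{P_0} \cdot \Psi$ is one by Lemma \ref{varphi} and the automorphism lemma for $\Upsilon$.
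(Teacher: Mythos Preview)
Your proposal is correct and follows essentially the same approach as the paper: normalize via Lemma~\ref{lemma18} so that the coordinate axes and the all-ones direction are fixed on the level of left ideals, then apply Lemma~\ref{lemma24} to strip off the field automorphism $\Upsilon$, leaving a residual automorphism that preserves each $\equiv$-class and is therefore a $\sigma$. Your bookkeeping with $P_0$ versus $P = P_0^{-1}$ is slightly more explicit than the paper's (which simply writes $\varphi_{P^{-1}}$ from the outset), and your verification that the residual $\Psi_2$ is both a graph automorphism and of type $\sigma$ is a welcome addition, but the logical skeleton is identical.
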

\begin{proof}
By Lemma \ref{lemma16}, there exists $P \in M_n^*(F_q)$ such that $[(\varphi_{P^{-1}}\cdot \Psi)(E_{e_k})] = [E_{e_k}]$, where $1 \leq k \leq n$ and $[(\varphi_{P^{-1}}\cdot \Psi)(E_\mathbf{1})] = [E_\mathbf{1}]$. In view of Lemma \ref{lemma24}, there exists $\Upsilon$, a field automorphism of $F_q$ such that $[X] = [(\Upsilon^{-1}\cdot \varphi_{P^{-1}}\cdot \Psi)(X)]$ for all $X \in M_n(F_q)$. Thus, $\Upsilon^{-1}\cdot \varphi_{P^{-1}}\cdot \Psi = \sigma$. Hence, $\Psi = \varphi_P\cdot \Upsilon \cdot \sigma$.
\end{proof}
\begin{theorem}
Let $\Psi \in Aut(\overrightarrow{\Gamma_{l-i}}(M_2(F_q)))$. Then there exists $\rho$, as defined above, such that $\rho = \Psi$.
\end{theorem}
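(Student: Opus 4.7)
The plan is to exploit the very coarse structure of $\overrightarrow{\Gamma_{l-i}}(M_2(F_q))$: since ranks in $M_2(F_q)$ take only the values $0,1,2$, Lemma \ref{rankm2} says the directed edge relation depends \emph{only} on rank, so the graph is essentially a three-layer structure with all within-layer vertices being "twins" (having identical in- and out-neighborhoods).

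First, I would invoke Corollary \ref{rank} to observe that any $\Psi \in Aut(\overrightarrow{\Gamma_{l-i}}(M_2(F_q)))$ satisfies $\operatorname{rank}(\Psi(X)) = \operatorname{rank}(X)$ for every $X \in M_2(F_q)$. Consequently $\Psi$ restricts to a bijection on each rank class $R_i$ for $i = 0, 1, 2$. (Here $R_0 = \{0\}$ and $R_2 = M_2^*(F_q)$, so $\Psi$ necessarily fixes $0$, and permutes the rank-$1$ matrices and the invertible matrices among themselves, respectively.)

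Next, I would match this against the definition of $\rho$. Recall $\rho$ was defined precisely as an arbitrary mapping $M_2(F_q) \to M_2(F_q)$ that permutes the elements of each $R_i$ ($i = 0,1,2$). Since $\Psi$ is a bijection of $M_2(F_q)$ that preserves each $R_i$ setwise, $\Psi$ itself is one such mapping. Thus $\Psi$ \emph{is} of the form $\rho$ (for a suitable choice of the permutations on $R_0$, $R_1$, $R_2$), giving the claimed equality.

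There is essentially no obstacle here: the heavy lifting was done in Lemmas \ref{rankm2}–\ref{lemma10} and Corollary \ref{rank}. The one small thing worth stating explicitly is that conversely (and for completeness, though not required by the statement), any such rank-preserving bijection is automatically an automorphism by Lemma \ref{lemma10}, so the automorphism group of $\overrightarrow{\Gamma_{l-i}}(M_2(F_q))$ is precisely $\operatorname{Sym}(R_0)\times\operatorname{Sym}(R_1)\times\operatorname{Sym}(R_2) = \operatorname{Sym}(R_1)\times\operatorname{Sym}(R_2)$. The proof itself is just two lines: apply Corollary \ref{rank} to get rank-preservation, then read off from the definition of $\rho$ that $\Psi = \rho$.
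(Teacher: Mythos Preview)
Your proposal is correct and follows essentially the same route as the paper: both use Corollary~\ref{rank} to see that $\Psi$ preserves each rank class $R_i$, and then simply observe that this is exactly the defining property of a map of type $\rho$, so one may take $\rho=\Psi$. The paper phrases the last step as ``define $\rho$ by $\rho(X)=\Psi(X)$, then $\rho^{-1}\Psi=\mathrm{id}$,'' which is your argument in slightly different words.
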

\begin{proof}
By Lemma \ref{lemma10}, $\rho$ is an automorphism of $\overrightarrow{\Gamma_{l-i}}(M_n(F_q))$. Since $rank(\Psi(X)) = rank(X)$, define $\rho$ over $M_2(F_q)$ as $\rho(X) = \Psi(X)$. It follows that $\rho^{-1}\cdot \Psi(X) = X$. Therefore, $\rho^{-1}\cdot \Psi = id$, where $id$ is identity mapping over $M_2(F_q)$. Thus, $\Psi = \rho$.
\end{proof}

\section{Graph Theoretic Properties of undirected Left-Ideal Relation Graph ${\Gamma_{l-i}}(M_n(F_q))$}

In this section, we consider the undirected left-ideal relation graph $\Gamma_{l-i}(M_n(F_q))$.  For $X, Y \in M_n(F_q)$ there is an edge between $X$ and $Y$ i.e. $X \sim Y$ if and only if $[X] \subset [Y]$ or $[Y] \subset [X]$. We investigate various graph theoretic properties of $\Gamma_{l-i}(M_n(F_q))$ including planarity, chromatic number, clique number and strong metric dimension.

\begin{theorem}
 The graph $\Gamma_{l-i}(M_n(F_q))$ is non-planar for $n \geq 2$.
 \end{theorem}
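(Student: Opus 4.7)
The plan is to exhibit $K_{3,3}$ as a subgraph of $\Gamma_{l-i}(M_n(F_q))$ and then invoke Kuratowski's theorem. The key observation is that if $P \in M_n^*(F_q)$ then $[P] = M_n(F_q)$, so $P$ is adjacent in $\Gamma_{l-i}(M_n(F_q))$ to every vertex $X$ with $[X] \subsetneq M_n(F_q)$; in particular, invertible matrices are pairwise non-adjacent.

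First I would fix three pairwise distinct invertible matrices $P_1, P_2, P_3 \in M_n^*(F_q)$; these exist for every $n \geq 2$ and every prime power $q$ because $|M_n^*(F_q)| = \prod_{i=0}^{n-1}(q^n - q^i) \geq (q^2 - 1)(q^2 - q) \geq 6$. Then I would choose three distinct vertices of proper left ideal, the natural choice being $X_1 = 0$, $X_2 = E_{e_1}$ and $X_3 = E_{e_2}$; each has rank at most $1 < n$, so $[X_i] \subsetneq M_n(F_q) = [P_j]$ for all $1 \le i, j \le 3$. By the key observation the nine edges $X_i \sim P_j$ are all present in $\Gamma_{l-i}(M_n(F_q))$ and form a copy of $K_{3,3}$ with bipartition $\{X_1,X_2,X_3\} \sqcup \{P_1,P_2,P_3\}$ as a (not necessarily induced) subgraph. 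Some additional edges may occur inside $\{X_1,X_2,X_3\}$ (for example $X_1 \sim X_2$, since $[0] \subsetneq [E_{e_1}]$), but that is irrelevant because $K_{3,3}$ is required only as a subgraph.

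By Kuratowski's theorem any graph containing a subdivision of $K_{3,3}$ is non-planar, so $\Gamma_{l-i}(M_n(F_q))$ is non-planar for every $n \geq 2$. There is no genuine obstacle in this proposal: the only points to check are the distinctness of the chosen matrices and the strict inclusions $[X_i] \subsetneq [P_j]$, both of which are immediate from the definitions. The one subtlety to bear in mind is that adjacency in $\Gamma_{l-i}$ demands \emph{proper} containment of left ideals, which is precisely why the three invertibles $P_j$ are mutually non-adjacent and the bipartite structure of our $K_{3,3}$ is genuine.
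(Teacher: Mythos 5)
Your proof is correct and takes essentially the same approach as the paper: exhibit a $K_{3,3}$ subgraph and invoke Kuratowski's theorem. The only difference is the choice of the six vertices (the paper uses three rank-one matrices against three matrices generating a common rank-two left ideal, giving an induced $K_{3,3}$, while you use $\{0, E_{e_1}, E_{e_2}\}$ against three invertible matrices, which works equally well since a subgraph copy suffices).
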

 \begin{proof}
 Let $e_{11} = (1, 1, 0, \cdots, 0) \in F_q^n$. Now, consider $V_1 = \{E_{e_1}, E_{e_2}, E_{e_{11}}\}$ and $V_2 = \{E_{\{e_1,e_2\}}, E_{\{e_2, e_1\}}, E_{\{e_{11},e_1\}}\}$. Observe that graph induced by $V_1 \cup V_2$ is isomorphic to $K_{3,3}$. Therefore, by Kurtowski's theorem, we have $\Gamma_{l-i}(M_n(F_q))$ is non-planar for $n \geq 2$.
  \end{proof}

\begin{theorem}\label{theorem4.3}
 For $M_n(F_q)$, we have  $\chi(\Gamma_{l-i}(M_n(F_q))) = \omega(\Gamma_{l-i}(M_n(F_q))) = n+1$.
 \end{theorem}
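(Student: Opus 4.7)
The plan is to establish the equality by the sandwich $\omega \leq \chi$ (always true) together with two matching bounds: a clique of size $n+1$ witnesses $\omega \geq n+1$, and a proper coloring by rank witnesses $\chi \leq n+1$.

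For the clique lower bound, I would take the matrices $E_0 = 0, E_1, E_2, \ldots, E_n$, whose ranks are $0, 1, \ldots, n$ respectively. The sequence of left ideals $[E_0] \subsetneq [E_1] \subsetneq \cdots \subsetneq [E_n] = M_n(F_q)$ is a strictly increasing chain (one can see this from $[E_r] = I_{E_r}$ having $F_q$-dimension $nr$, as computed in the discussion preceding Corollary \ref{rank}). So for any $0 \le i < j \le n$ we have $[E_i] \subsetneq [E_j]$, which makes $E_i \sim E_j$ in $\Gamma_{l-i}(M_n(F_q))$. This gives a clique of size $n+1$, so $\omega(\Gamma_{l-i}(M_n(F_q))) \geq n+1$.

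For the chromatic upper bound, I would color each vertex $X$ by $\mathrm{rank}(X) \in \{0, 1, \ldots, n\}$, using $n+1$ colors in total. To verify this is proper, suppose $X \sim Y$ with $X \neq Y$; then without loss of generality $[X] \subsetneq [Y]$. Using $\dim [X] = n \cdot \mathrm{rank}(X)$ (again from the computation in the preliminaries of Section 3), proper containment forces $\dim [X] < \dim [Y]$ and hence $\mathrm{rank}(X) < \mathrm{rank}(Y)$. So adjacent vertices receive distinct colors, yielding $\chi(\Gamma_{l-i}(M_n(F_q))) \le n+1$.

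Combining, $n+1 \leq \omega(\Gamma_{l-i}(M_n(F_q))) \leq \chi(\Gamma_{l-i}(M_n(F_q))) \leq n+1$, which forces equality throughout. The main conceptual input is the dimension formula $\dim [X] = n \cdot \mathrm{rank}(X)$, already established in the excerpt; with that in hand there is no real obstacle, only the bookkeeping of reading off the rank-to-dimension conversion on both the clique side (to confirm strictness of the containments) and the coloring side (to rule out adjacency among equal-rank matrices).
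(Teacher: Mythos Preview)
Your proof is correct and follows essentially the same approach as the paper: the same clique $\{E_0,E_1,\dots,E_n\}$ gives $\omega\ge n+1$, and coloring by rank gives $\chi\le n+1$. You supply a bit more justification (via the dimension formula) for why the $E_i$'s are mutually adjacent and why equal-rank matrices are not, whereas the paper simply asserts these facts, but the argument is the same.
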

 \begin{proof}
 Consider $\mathcal{C} = \{E_0, E_1, E_2, \cdots, E_n\}$. Then the graph induced by $\mathcal{C}$ is clique of $\Gamma_{l-i}(M_n(F_q))$. It follows that $\omega(\Gamma_{l-i}(M_n(F_q))) \geq n+1$. Now we need to show that $\chi(\Gamma_{l-i}(M_n(F_q)))  \leq n+1$. Since no two matrices of same rank are adjacent in $\Gamma_{l-i}(M_n(F_q))$. Thus, we can assign one color to matrices of same rank. It follows that $\chi(\Gamma_{l-i}(M_n(F_q))) \leq n+1$. Thus, $\chi(\Gamma_{l-i}(M_n(F_q))) = \omega(\Gamma_{l-i}(M_n(F_q))) = n+1$.
  \end{proof}

\begin{lemma}
The eccentricity of $X \in V(\Gamma_{l-i}(M_n(F_q)))$ is given below:
\begin{equation*}
ecc(X) = 
\begin{cases}
1 & if \ X = 0\\
2 & if \ X \neq 0
\end{cases}
\end{equation*}
\end{lemma}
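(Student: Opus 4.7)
The plan is to split the argument into two cases depending on whether $X$ is the zero matrix, and in each case to exploit the fact that $0$ is a universal neighbor. More precisely, since $[0] = \{0\}$ is properly contained in $[Y]$ for every nonzero $Y \in M_n(F_q)$, the vertex $0$ is adjacent to every other vertex of $\Gamma_{l-i}(M_n(F_q))$. This single observation drives both cases.

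Case $X = 0$: the remark above shows immediately that every other vertex is at distance $1$ from $0$, so $ecc(0) = 1$.

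Case $X \neq 0$: I will prove the upper bound $ecc(X) \leq 2$ and the matching lower bound $ecc(X) \geq 2$. For the upper bound, take any $Y \in V(\Gamma_{l-i}(M_n(F_q))) \setminus \{X\}$. If $Y = 0$ then $d(X,Y) = 1$, and if $Y \neq 0$ then the two-edge walk $X \sim 0 \sim Y$ shows $d(X,Y) \leq 2$. For the lower bound I must exhibit some $Y$ with $d(X,Y) \geq 2$, i.e.\ with $X \not\sim Y$. The natural candidate is a distinct $Y$ satisfying $[Y] = [X]$, because adjacency in $\Gamma_{l-i}(M_n(F_q))$ requires \emph{proper} containment of left ideals; the path through $0$ then gives $d(X,Y) = 2$.

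The only real step is verifying that such a $Y$ exists. For this I will use that left multiplication by any $A \in M_n^*(F_q)$ preserves the row space, hence the left ideal: $[AX] = [X]$. It remains to choose $A \neq I$ so that $AX \neq X$. Since $X \neq 0$, some row $i$ of $X$ is nonzero; picking any $j \neq i$ (possible because $n \geq 2$) and $0 \neq c \in F_q$, the elementary shear $A = E + cE_{j,i}$ is invertible and satisfies $AX = X + cE_{j,i}X \neq X$, because $E_{j,i}X$ has the nonzero $i$-th row of $X$ placed in row $j$. Setting $Y = AX$ finishes the argument. The only mild obstacle is this existence claim, which is the point where $n \geq 2$ is used; all other steps are immediate from the definition of adjacency.
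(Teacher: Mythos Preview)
Your proof is correct and follows essentially the same route as the paper: both cases hinge on the fact that $0$ is adjacent to every nonzero matrix, and for $X\neq 0$ both arguments exhibit a vertex $Y\neq X$ that is non-adjacent to $X$, giving $d(X,Y)=2$. The only difference is in how that $Y$ is produced: the paper simply observes that there is always a second matrix of the same rank as $X$ (and two equal-rank matrices cannot have one left ideal properly contained in the other), whereas you construct $Y=AX$ explicitly with $A\in M_n^*(F_q)$ so that $[Y]=[X]$; your construction is the special case $[Y]=[X]$ of the paper's ``same rank'' idea, carried out in more detail.
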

\begin{proof}
If $X = 0$ then $X$ is adjacent to every other vertex of $\Gamma_{l-i}(M_n(F_q))$. Hence, $ecc(X) = 1$ as eccentricity of a vertex is maximum distance of a vertex to any other vertex. Now, if $X \neq 0$ then $rank(X) \geq 1$. For $rank \geq 1$ there are atleast two vertices of that \(rank\). Let $Y \in M_n(F_q)$ such that $rank(X) = rank(Y)$. Then $d(X, Y) = 2$. Thus, the result holds.
\end{proof}
\begin{corollary}
The graph $\Gamma_{l-i}(M_n(F_q))$ is connected and $diam(\Gamma_{l-i}(M_n(F_q))) = 2$ and $r(\Gamma_{l-i}(M_n(F_q)) = 1$.
 \end{corollary}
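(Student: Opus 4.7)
The plan is to extract everything directly from the preceding eccentricity lemma, since the corollary is essentially a bookkeeping consequence.

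First I would establish connectedness. The key observation is that the zero matrix $0$ generates the trivial left ideal $[0] = \{0\}$, which is properly contained in $[X]$ for every nonzero $X \in M_n(F_q)$. Hence $0$ is adjacent to every other vertex, so the graph is connected and every vertex lies within distance $2$ of every other vertex (passing through $0$ if necessary). This already shows $\mathrm{diam}(\Gamma_{l-i}(M_n(F_q))) \leq 2$.

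Next I would invoke the preceding lemma, which computes $\mathrm{ecc}(X) = 1$ if $X = 0$ and $\mathrm{ecc}(X) = 2$ if $X \neq 0$. By definition, the diameter is the maximum eccentricity over all vertices; since $M_n(F_q)$ contains nonzero matrices (as $n \geq 1$), the maximum is attained and equals $2$. Similarly, the radius is the minimum eccentricity, which is $1$, attained uniquely at the vertex $0$.

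There is really no obstacle here: every claim in the corollary is a one-line consequence of the previous lemma once one notes that the zero vertex is universally adjacent. The only subtlety worth spelling out is that connectedness follows either from the existence of the universal vertex $0$ or from the fact that every eccentricity is finite, both of which are immediate.
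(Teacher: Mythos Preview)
Your proposal is correct and matches the paper's treatment: the corollary is stated without proof because it is an immediate consequence of the preceding eccentricity lemma, and your argument spells out precisely that derivation. There is nothing to add.
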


\begin{corollary}
The dominance number of $\Gamma_{l-i}(M_n(F_q))$ is $1$.
 \end{corollary}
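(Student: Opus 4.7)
The plan is to exhibit a single vertex that dominates every other vertex of $\Gamma_{l-i}(M_n(F_q))$, namely the zero matrix. This is essentially a direct consequence of the preceding eccentricity lemma, which shows that $\mathrm{ecc}(0)=1$, but I would give a self-contained argument phrased in terms of left ideals.

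First I would observe that for any nonzero $X \in M_n(F_q)$, the left ideal $[0]=\{0\}$ is properly contained in $[X]$, since $[X]$ contains $X \neq 0$. By the definition of $\Gamma_{l-i}(M_n(F_q))$ (two distinct vertices $U,V$ being adjacent iff $[U]\subset [V]$ or $[V]\subset [U]$), this means that $0 \sim X$ for every nonzero vertex $X$. Hence $D=\{0\}$ is a dominating set of $\Gamma_{l-i}(M_n(F_q))$, giving $\gamma(\Gamma_{l-i}(M_n(F_q))) \le 1$.

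For the reverse inequality, I would just note that any graph on at least one vertex satisfies $\gamma \geq 1$, and $\Gamma_{l-i}(M_n(F_q))$ certainly has vertices. Combining the two bounds yields $\gamma(\Gamma_{l-i}(M_n(F_q))) = 1$, and the zero matrix is the (a) dominating vertex.

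There is no real obstacle here: the statement is essentially a restatement of the fact that $0$ has eccentricity $1$, and the proof is a one-line appeal to the definition of domination together with the observation that $[0]$ is properly contained in every nonzero left ideal of $M_n(F_q)$.
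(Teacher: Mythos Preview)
Your proof is correct and follows exactly the approach the paper intends: the corollary is stated without proof in the paper precisely because it is immediate from the preceding eccentricity lemma (the zero matrix is adjacent to every other vertex, so $\{0\}$ is a dominating set). Your self-contained argument via $[0]\subset[X]$ for every nonzero $X$ is just an explicit unpacking of that observation.
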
 
\begin{theorem}
 If the graph $\Gamma_{l-i}(M_n(F_q))$ has a cycle, then $g(\Gamma_{l-i}(M_n(F_q))) = 3$.
 \end{theorem}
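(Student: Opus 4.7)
The plan is to show directly that whenever $\Gamma_{l-i}(M_n(F_q))$ contains a cycle, we can exhibit a triangle, which forces $g(\Gamma_{l-i}(M_n(F_q))) = 3$. The statement has the form of an implication, so the natural strategy is to separate the trivial case (where the hypothesis is vacuous) from the substantive case (where we need to build a $3$-cycle explicitly).

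First, I would handle the degenerate case $n = 1$. Here $M_1(F_q) = F_q$ is a field, so there are precisely two left ideals, namely $\{0\}$ and $F_q$. Consequently $[0]$ is properly contained in $[x]$ for every nonzero $x \in F_q$, while any two nonzero elements generate the same left ideal. Thus $\Gamma_{l-i}(M_1(F_q))$ is the star $K_{1,q-1}$, which has no cycles, and the implication in the theorem is vacuously true.

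Now assume $n \geq 2$. I propose the triangle $\{E_0, E_1, E_2\}$, using the notation $E_r = \sum_{i=1}^r E_{i,i}$ (with $E_0 = 0$) fixed earlier in the paper. We have $[E_0] = \{0\} \subset [E_1]$ because $E_1 \neq 0$. Moreover, $[E_1]$ consists of matrices whose rank-$1$ row span lies in the first coordinate direction, whereas $[E_2]$ contains matrices (such as $E_{2,2}$) whose rows are not supported on the first coordinate alone; hence $[E_1] \subsetneq [E_2]$. These strict inclusions give the three edges $E_0 \sim E_1$, $E_1 \sim E_2$, and $E_0 \sim E_2$ in $\Gamma_{l-i}(M_n(F_q))$, producing a $3$-cycle.

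Since a triangle realises the minimum possible cycle length, we conclude $g(\Gamma_{l-i}(M_n(F_q))) = 3$ whenever the graph contains any cycle. No real obstacle arises here; the only subtlety is remembering to address the $n = 1$ case separately so that the implication is not mis-stated, and to cite the already-established $E_r$ notation so that the displayed triangle is unambiguous.
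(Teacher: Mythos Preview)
Your proposal is correct and follows essentially the same approach as the paper: both exhibit an explicit triangle among the $E_r$'s once $n\geq 2$ (you use $\{E_0,E_1,E_2\}$, the paper uses $\{E_0,E_1,E_n\}$), and both note that the hypothesis forces $n\geq 2$. Your treatment is slightly more detailed in justifying the $n=1$ case and the strict inclusions, but the underlying idea is identical.
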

 \begin{proof}
 Suppose that $\Gamma_{l-i}(M_n(F_q))$ has a cycle. Then $n \geq 2$. Now 
 consider the set $C = \{E_0, E_1, E_n\}$. The graph induced by vertex set $C$ is a cycle of length $3$. Thus, $g(\Gamma_{l-i}(M_n(F_q))) = 3$.
 \end{proof}
\begin{theorem}
 The strong metric dimension of $\Gamma_{l-i}(M_n(F_q))$ is $q^{n^2}-n-1$.
 \end{theorem}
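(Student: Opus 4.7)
The plan is to apply Theorem \ref{strong-metric-dim} to $\Gamma := \Gamma_{l-i}(M_n(F_q))$. Since $\Gamma$ has diameter $2$ (by the corollary above) and $|V(\Gamma)| = |M_n(F_q)| = q^{n^2}$, Theorem \ref{strong-metric-dim} gives $\operatorname{sdim}(\Gamma) = q^{n^2} - \omega(\widehat{\Gamma})$, so it suffices to verify $\omega(\widehat{\Gamma}) = n+1$. I will establish this by proving that the equivalence relation $\equiv$ is trivial on $V(\Gamma)$, whence $\widehat{\Gamma} \cong \Gamma$ and the required clique number equals $\omega(\Gamma) = n+1$ by Theorem \ref{theorem4.3}.

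To show $\equiv$ is trivial, I would suppose for contradiction that $X \neq Y$ satisfy $N[X] = N[Y]$. Since $X \in N[X] = N[Y]$ and $X \neq Y$, we have $X \sim Y$, and we may assume $[X] \subset [Y]$. The aim is to exhibit some $Z \in N[Y] \setminus N[X]$. If $X \neq 0$, then $X$ cannot be fixed by every $P \in \operatorname{GL}_n(F_q)$ (otherwise $(P-E)X = 0$ for all invertible $P$, which forces $X = 0$, as one sees by taking $P = E + E_{i,j}$ for various $i \neq j$), so one can pick $P \neq E$ with $PX \neq X$ and set $Z = PX$. Then $[Z] = [X]$ (left multiplication by an invertible matrix preserves the left ideal), so $Z \sim Y$ but $Z \not\sim X$, and $Z \neq Y$ because $[Z] = [X] \subset [Y]$; thus $Z \in N[Y] \setminus N[X]$, a contradiction. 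If $X = 0$, then $N[0] = V(\Gamma)$, so $N[Y] = V(\Gamma)$ would force $Y$ to be adjacent to every other vertex; but for $Y \neq 0$, another generator of $[Y]$ (which exists by the same orbit argument) is not adjacent to $Y$, so $N[Y] \neq V(\Gamma)$, again a contradiction.

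The main technical point is the existence of a second generator of a nonzero left ideal of $M_n(F_q)$, which I handle uniformly via the orbit of $X$ under left multiplication by $\operatorname{GL}_n(F_q)$; this orbit is a single point only when $X = 0$, which is exactly what is needed for $n \geq 2$. Once the triviality of $\equiv$ is in place, Theorem \ref{strong-metric-dim} together with Theorem \ref{theorem4.3} yields
\[
\operatorname{sdim}(\Gamma_{l-i}(M_n(F_q))) = q^{n^2} - (n+1) = q^{n^2} - n - 1.
\]
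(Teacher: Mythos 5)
Your proof is correct, and it follows the same skeleton as the paper's: both invoke Theorem \ref{strong-metric-dim} (legitimate, since the diameter is $2$) and reduce everything to showing $\omega(\widehat{\Gamma}) = n+1$. Where you differ is in how that clique number is computed. The paper takes the shorter sandwich route: $\omega(\widehat{\Gamma}) \leq \omega(\Gamma_{l-i}(M_n(F_q))) = n+1$ because $\widehat{\Gamma}$ is (isomorphic to) an induced subgraph, and $\omega(\widehat{\Gamma}) \geq n+1$ because $E_0, E_1, \dots, E_n$ form a clique lying in pairwise distinct $\equiv$-classes. You instead prove the stronger structural fact that $\equiv$ is the identity relation on $V(\Gamma_{l-i}(M_n(F_q)))$, so that $\widehat{\Gamma} \cong \Gamma_{l-i}(M_n(F_q))$ and Theorem \ref{theorem4.3} applies directly. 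Your key observation --- that every nonzero $X$ admits a second generator $Z = PX \neq X$ of the same left ideal, which then lies in $N[Y] \setminus N[X]$ whenever $[X] \subset [Y]$ --- is sound (the orbit of $X$ under left multiplication by $M_n^*(F_q)$ is exactly the set of generators of $[X]$, and your elementary-matrix argument shows it is a singleton only for $X = 0$ when $n \geq 2$). This buys more information than the paper's argument (a complete description of $\widehat{\Gamma}$) at the cost of a slightly longer case analysis; the paper's version needs only that one specific clique separates into distinct classes. One minor caveat, which you flag yourself: your orbit argument genuinely requires $n \geq 2$ (for $n=1$, $q=2$ the orbit of a unit is a singleton, and indeed the stated formula fails there since the graph has diameter $1$), so the theorem should be read with $n \geq 2$.
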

 \begin{proof}
 Since, $diam(\Gamma_{l-i}(M_n(F_q))) = 2$. Then by Theorem \ref{strong-metric-dim}, we have $ sdim(\Gamma_{l-i}(M_n(F_q))) = |M_n(F_q)|-\omega(R_{\Gamma_{l-i}(M_n(F_q))})$, where $R_{\Gamma_{l-i}(M_n(F_q))} $ is the reduced graph of $\Gamma_{l-i}(M_n(F_q))$. It is known that $|M_n(F_q)| = q^{n^2}$. Since  $\omega(\Gamma_{l-i}(M_n(F_q))) \geq \omega(R_{\Gamma_{l-i}(M_n(F_q))})$, by Theorem \ref{theorem4.3}, we have  $\omega(R_{\Gamma_{l-i}(M_n(F_q))}) \leq n+1$. Define a relation $\equiv$ on $M_n(F_q)$ such that $X \equiv Y$ if and only if $N(X) = N(Y)$, where $N(X) = \{Z\ |\ X \sim Z\ \text{and} \ Z\in M_n(F_q)\}\cup\{X\}$. Observe that $\equiv$ is an equivalence relation. Let $U(\Gamma_{l-i}(M_n(F_q)))$ be the complete set of representative elements for the above mentioned equivalence relation, then notice that $U(\Gamma_{l-i}(M_n(F_q)))$ is the vertex set of $R_{\Gamma_{l-i}(M_n(F_q))}$. Now we consider $\mathcal{C} = \{E_0, E_1, E_2, \cdots, E_n\} \subseteq U(\Gamma_{l-i}(M_n(F_q)))$. Note that $E_i$ and $E_j$ belong to distinct equivalence classes for $i\neq j$. As graph induced by $\mathcal{C}$ is a clique of $R_{\Gamma_{l-i}(M_n(F_q))}$. It follows that $\omega(R_{\Gamma_{l-i}(M_n(F_q))}) \geq n+1$. Therefore, $\omega(R_{\Gamma_{l-i}(M_n(F_q))}) = n+1$. Thus, $sdim(\Gamma_{l-i}(M_n(F_q))) = q^{n^2}-n-1$.
  \end{proof}
\begin{theorem}
The graph $\Gamma_{l-i}(M_n(F_q))$ is not Eulerian.
\end{theorem}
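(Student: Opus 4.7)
The plan is to invoke the Euler criterion recalled in Section 2: since $\Gamma_{l-i}(M_n(F_q))$ has already been shown to be connected (indeed of diameter $2$), it is Eulerian if and only if every vertex has even degree. I therefore aim to exhibit a single vertex of odd degree, from which non-Eulerianness follows via the contrapositive.

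The two candidates I plan to use are the extremes of the left-ideal lattice: the zero matrix $0$, whose left ideal is the smallest, and the identity matrix $E$, whose left ideal is the whole ring $M_n(F_q)$. Their degrees can be read off directly from the adjacency rule. The vertex $0$ is adjacent to every non-zero matrix (since $[0] \subsetneq [Y]$ for all $Y \neq 0$), giving $\deg(0) = q^{n^{2}} - 1$. On the other hand $E$ is adjacent exactly to the non-invertible matrices, because every invertible matrix shares the ideal $M_n(F_q)$ with $E$ and so is not properly comparable to it; this gives $\deg(E) = q^{n^{2}} - |M_n^{*}(F_q)|$.

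I will finish by splitting on the parity of $q$. If $q$ is even, then $q^{n^{2}}$ is even, so $\deg(0) = q^{n^{2}} - 1$ is odd. If $q$ is odd, the standard order formula $|M_n^{*}(F_q)| = \prod_{i=0}^{n-1}(q^{n} - q^{i}) = \prod_{i=0}^{n-1} q^{i}(q^{n-i} - 1)$ shows that $|M_n^{*}(F_q)|$ is even, because each factor $q^{n-i} - 1$ is an odd number minus $1$; combined with $q^{n^{2}}$ being odd this makes $\deg(E)$ odd. The only real subtlety is that the case split is needed at all: $\deg(0)$ is even whenever $q$ is odd and $\deg(E)$ is even whenever $q$ is even, so no single vertex of odd degree can be found uniformly in $q$. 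Beyond this, the argument is a routine parity calculation using only the adjacency rule and the well-known order of $M_n^{*}(F_q)$.
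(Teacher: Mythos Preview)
Your proposal is correct and is essentially the same argument as the paper's: the paper likewise computes $\deg(0)=q^{n^{2}}-1$ and, for a rank-$n$ matrix $X$, $\deg(X)=q^{n^{2}}-\prod_{j=0}^{n-1}(q^{n}-q^{j})$, and then splits on whether $q$ is even or odd to find an odd-degree vertex in each case. The only cosmetic difference is that the paper speaks of an arbitrary full-rank matrix rather than specifically $E$.
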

\begin{proof}
Let $deg(Y)$ denote degree of matrix $Y \in M_n(F_q)$ in $\Gamma_{l-i}(M_n(F_q))$. As, zero matrix is adjacent to every other matrix of $\Gamma_{l-i}(M_n(F_q))$. It follows that degree of zero matrix is $q^{n^2}-1$. If $q = 2^m$ for some $m \in \mathbb{N}$, then $q^{n^2}-1$ will be odd and hence degree of zero matrix is odd. 
If $X \in M_n(F_q)$ such that $rank(X) = n$ then $deg(X) = q^{n^2}-M(n,n,n,q)$ from \cite{abdel2012counting}, where $M(n,n,n,q)$ is number of $n\times n$ matrices of $rank\ n$ over a finite field $F_q$. Also, $M(n,n,n,q) = \prod_{j=0}^{n-1}(q^n-q^j)$. Now if $q = p^m$ for some odd prime $p$ and $m \in \mathbb{N}$, then $q^n-q^j$ will be even for all $0 \leq j \leq n-1$. It follows that $M(n,n,n,q)$ will be even and hence $q^{n^2}-M(n,n,n,q)$ will be odd as $q^{n^2}$ will be odd. Therefore, $deg(X)$ will be odd for $X \in M_n(F_q)\ and \ rank(X)=n$.
Thus, the zero matrix and any full rank matrix $X$ cannot have even degree simultaneously. Hence, $\Gamma_{l-i}(M_n(F_q))$ is not Eulerian.
\end{proof}

\section{Acknowledgement}
The first author wishes to acknowledge the support of MATRICS Grant  (MTR/2018/000779) funded by SERB, India. The second author gratefully acknowledge for providing financial support to CSIR  (09/719(0093)/2019-EMR-I) government of India.

\end{document}